\documentclass[AMA,LATO1COL]{my-WileyNJD-v2}

\articletype{Research Article}

\received{26-Jul-2021}
\accepted{17-Sep-2021}

\raggedbottom


\usepackage{amsmath,amssymb}


\begin{document}

\title{Controlling crop pest with a farming awareness based integrated approach and optimal control}

\author[1]{Teklebirhan Abraha}

\author[2]{Fahad Al Basir}

\author[1]{Legesse Lemecha Obsu}

\author[3]{Delfim F. M. Torres*}

\authormark{T. Abraha \textsc{et al.}}

\address[1]{\orgdiv{Department of Mathematics}, 
\orgname{Adama Science and Technology University}, 
\orgaddress{\state{Adama}, 
\country{Ethiopia}}}

\address[2]{\orgdiv{Department of Mathematics}, 
\orgname{Asansol Girls' College}, 
\orgaddress{\state{West Bengal 713304}, 
\country{India}}}

\address[3]{\orgdiv{R\&D Unit CIDMA, Department of Mathematics}, 
\orgname{University of Aveiro}, 
\orgaddress{\state{3810-193 Aveiro}, 
\country{Portugal}}}

\corres{*Delfim F. M. Torres, 
R\&D Unit CIDMA, Department of Mathematics,
University of Aveiro, 3810-193 Aveiro, Portugal.
\email{delfim@ua.pt}}

\abstract[Summary]{We investigate a mathematical model in crop pest controlling, 
considering plant biomass, pest, and the effect of farming awareness. 
The pest population is divided into two compartments: susceptible pests and infected pests. 
We assume that the growth rate of self-aware people is proportional to the density 
of susceptible pests existing in the crop arena. Impacts of awareness are modeled through 
the usual mass action term and a saturated term. It is further assumed that self-aware people 
will adopt chemical and biological control methods, namely integrated pest management. 
Bio-pesticides are costly and require a long-term process, expensive to impose. 
However, if chemical pesticides are introduced in the farming system along with bio-pesticides, 
the process will be faster as well as cost-effective. Also, farming knowledge is equally important. 
In this article, a mathematical model is derived for controlling crop pests through 
an awareness-based integrated approach. In order to reduce the negative effects of pesticides, 
we apply optimal control theory.}

\keywords{Mathematical modeling,
stability, Hopf-bifurcation, optimal control,
numerical simulations.}

\jnlcitation{\cname{%
\author{Abraha T.}, 
\author{Al Basir F.}, 
\author{Obsu L. L.}, and 
\author{Torres D. F. M.}} (\cyear{2021}), 
\ctitle{Controlling crop pest with a farming awareness based integrated approach and optimal control}, 
\cjournal{Comp. and Math. Methods}, \cvol{2021}.}

\maketitle

\footnotetext{\textbf{Abbreviations:} 
IPM, Integrated Pest Management; PMP, Pontryagin Minimum Principle;
NPV, Nuclear Polyhedrosis Virus}


\section{Introduction}
\label{sec1}

Problems connected with pests have become evident around the world 
as cultivation began. World's food supply is being wasted due to 
the cause of pests in agriculture. On the other hand, major side-effects 
of synthetic pesticides on the environment, human health, 
and biodiversity, are generating widespread concerns. Thus, farmers' awareness 
of the risk of synthetic pesticides uses is one of the important factors to consider. 
The use of biological contents to protect crops against pests needs indigenous 
knowledge to implement such contents in pest management \cite{TAbraha,Abraha2021}.

There are several good modeling studies on pest control. For example, Chowdhury et al. 
\cite{NS,EC19} have proposed and analyzed mathematical models for biological pest control 
using the virus as a controlling agent. In fact, all eco-epidemic models with susceptible prey, 
infected prey, and predators, can be used to discuss the nature of the susceptible pest, 
infected pest, and their predators \cite{JTB}. Zhang et al. \cite{Zhang} used a delayed 
stage-structured epidemic model for pest management strategy. Wang and Song \cite{Wang08} 
used mathematical models to control a pest population by infected pests. However, they did 
not use the influence of the predator populations on their works.

Many researchers utilize mathematical models for pest control in order to study different 
aspects of pest management policies with probable outcomes for improved applications, 
using system's analysis within the mathematical paradigm. Most of them suggest using 
chemical pesticides \cite{Ghosh,Kar}. However, it is recorded that chemical pesticides 
have resulted in pest resurgence, acute and chronic health problems, and environmental 
pollution \cite{Bhattacharya04}. Thus, to resolve this type of problem, the concept of 
IPM is becoming more popular among researchers with increasing application 
in the field by marginal farmers.

In this paper, we formulate a mathematical model, 
incorporating the farming awareness based integrated approach. 
The main focus is to compare the basic advantage of favoring 
the biological and combined strategy to minimize 
the pest problem and predict new insights on the
pest management, in general.  In order to reduce the negative effects 
of pesticides, we apply an optimal control approach. The dynamic 
of the system, without application of control, is analyzed 
through stability and bifurcation theory. Then, we formulate 
a three control parameter optimal control problem and solve it 
by applying PMP to find out the optimal level of both pesticide 
and the advertisement cost for cost effectiveness and
minimizing the negative effect due to pesticides.
Numerical simulations illustrate the analytical results. 
Finally, we discuss the outcomes with a conclusion.


\section{Description and model formulation}
\label{sec2}

We consider four populations into our mathematical model, 
namely plants biomass $X(t)$, susceptible pest $S(t)$, 
infected pest $I(t)$, and level of awareness $A(t)$.
The following assumptions are made to formulate the mathematical model:
\begin{itemize}
\item Under influence of bio-pesticides, healthy pest population becomes infected.
Infected pest can attack the plant but the rate is very lesser than susceptible
pest. We assume that infected pest can consume the plant biomass following 
a Holling type II response, whereas susceptible pest consume 
following a Holling type I response function.
	
\item Due to the finite size of crop field, we assume logistic growth
for the density of crop biomass, with net growth rate $r$ and carrying capacity $K$.
	
\item Susceptible attacks the crop, thereby causing considerable crop reduction.
If we infect the susceptible pest by pesticides, then the attack by pest
can be controlled. Here we assume that aware farmers will adopt biological pesticides
for the control of the crop pest, as it has less side effects and is also environment friendly.
Biopesticides are used to infect the healthy pest. Infected pest has an additional 
mortality due to infection.
	
\item Let $\alpha$ be the consumption rate of pests. 
There is a pest infection rate, $\lambda$, because of aware human 
interactions and activity such as use of biopesticides (e.g., NPV), 
modeled via the usual mass action term $\lambda A S$. We denote by $d$ the natural 
mortality rate of pest and by $\delta$ the additional mortality rate 
of infected pest due to aware people activity.
	
\item It is assumed that the level of awareness will increase at a rate $\omega$, 
proportional to the number of pests per plant noticed in the farming system. 
There could be fading of interest in this exploitation. We let $\eta$ 
be the rate of fading of interest of aware people.

\item To speed up the pest control process, chemical synthetic pesticides are introduced. 
It causes additional death to both susceptible and infected pest populations. Following
\cite{EC18}, we model the situation by the terms $\frac{\gamma SA}{1+A}$ 
and $\frac{\gamma IA}{1+A}$, respectively.
\end{itemize}

Based on the above assumptions, we have the following mathematical model:
\begin{equation}
\label{eq1}
\begin{cases}
\displaystyle \frac{dX}{dt} 
=r X\left(1-\frac{X}{K}\right)-\alpha XS-\frac{\phi \alpha XI}{a+X},\\[0.3cm]
\displaystyle \frac{dS}{dt} 
=m_1\alpha XS-\lambda AS-dS-\frac{\gamma SA}{1+A} ,\\[0.3cm]
\displaystyle \frac{dI}{dt} 
=\frac{m_2\phi \alpha XI}{a+X}+\lambda AS-(d+\delta)I-\frac{\gamma IA}{1+A},\\[0.3cm]
\displaystyle \frac{dA}{dt} =\omega+\sigma(S+I) - \eta A,
\end{cases}
\end{equation}
subject to the initial conditions 
\begin{equation}
\label{eq:ic}
X(0)\geq0,\quad S(0)\geq0, \quad I(0)\geq0, \quad A(0)\geq 0.
\end{equation}
Here $\alpha$ is the attack rate of pests on crop. 
Infected pests can also attack the crop but with a lower rate, $\phi\alpha$,
with $\phi<1$; $a$ is the half saturation constant, $m_1$ and $m_2$ are the ``conversion efficiency'' 
of susceptible and infected pests, respectively, i.e., they measure how efficiently can the pests 
utilize plant resource. Since pesticide affected pests have lowered efficiency, $m_1>m_2$, 
and $\gamma$ denotes the increase of level from global advertisement by radio, TV, etc.
It is natural to assume that all the parameters of model \eqref{eq1} are positive.


\section{Positivity of solutions and the invariant region}
\label{sec3}

Existence and positivity of the solutions are the main properties 
that system \eqref{eq1} must satisfy for the model to be
well-posed. Such properties are proved in this section. They describe the range 
in which the solution of the equations is biologically important. 

The feasible region is given by		
\[
\Omega=\{(X,S,I,A)\in\mathbb{R}_+^{4}\colon X\geq0, S\geq0,I\geq0,A\geq0\}.
\]
To show that the first two equations of the system \eqref{eq1} are positive, 
we use the following lemma. 

\begin{lemma}
\label{lemma1}
Any solution of the differential equation 
$\frac{dX}{dt}=X\psi(X, Y )$ is always positive.
\end{lemma}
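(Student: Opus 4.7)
My plan is to reduce the equation to a linear first-order ODE in $X$ with a time-dependent coefficient and then integrate explicitly. Assume that a solution $(X(t), Y(t))$ exists on some interval $[0, T)$ and that $\psi$ is continuous enough along this solution for the composition $t \mapsto \psi(X(t), Y(t))$ to be integrable; under the standing hypotheses of the paper this will be automatic because $\psi$ is assembled from smooth functions of the state variables.

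The key manipulation is to divide formally by $X$, which is legitimate as long as $X$ does not vanish. On any subinterval where $X(t) \neq 0$, the equation $\frac{dX}{dt} = X\,\psi(X, Y)$ is equivalent to $\frac{d}{dt}\ln|X(t)| = \psi(X(t), Y(t))$. Integrating from $0$ to $t$ and exponentiating gives
\[
X(t) \;=\; X(0)\,\exp\!\left(\int_0^t \psi\bigl(X(s), Y(s)\bigr)\,ds\right).
\]
Because the exponential factor is strictly positive, $X(t)$ carries the same sign as $X(0)$ for every $t$ in the interval of existence, and in particular $X(0) \geq 0$ implies $X(t) \geq 0$.

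To justify the division by $X$, I would note that $X \equiv 0$ is itself a solution of the ODE, so by the uniqueness theorem for ODEs (applied to the closed-loop Lipschitz system obtained by freezing the other variables along the trajectory) no orbit starting from $X(0) > 0$ can reach $X = 0$ in finite time. This means the exponential formula is valid on the whole maximal interval of existence, not just on a small initial piece.

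The main obstacle, minor as it is, is the regularity issue: one needs to know that $\psi(X(\cdot), Y(\cdot))$ is integrable and that uniqueness applies. In the concrete application to system \eqref{eq1}, the relevant $\psi$ for the $X$ and $S$ equations are rational functions of the (bounded on compact subintervals) state variables, so both requirements are met, and the lemma delivers $X(t) \geq 0$ and $S(t) \geq 0$ as desired.
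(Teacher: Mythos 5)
Your proof is correct and follows essentially the same route as the paper: separate variables, integrate to get $X(t)=X(0)\exp\left(\int_0^t \psi(X(s),Y(s))\,ds\right)$, and conclude that the sign of $X(0)$ is preserved. The only difference is that you explicitly justify the division by $X$ via uniqueness (the invariance of the solution $X\equiv 0$), which the paper leaves implicit, so your write-up is a slightly more careful version of the same argument.
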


\begin{proof}
A differential equation of the form $\frac{dX}{dt}=X\psi(X, Y )$ 
can be written as $\frac{dX}{X}=X\psi(X, Y )dt$. Integrating, 
we can write that $\ln X=C_0+\int\psi(X,Y)dt$, i.e., we have
$X=C_1e^{\int\psi(X,Y)dt}>0$ for $C_1>0$. 
\end{proof}

\begin{theorem}[non-negativeness of the solutions]
The solutions of system \eqref{eq1} subject to given non-negative
initial conditions \eqref{eq:ic} remain non-negative for all $t>0$.
\end{theorem}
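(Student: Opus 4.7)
The plan is to split the four equations into two groups according to their structure. Equations for $X$ and $S$ both have the form $\dot{u} = u\,\psi(\cdot)$ and therefore fall directly under Lemma~\ref{lemma1}; equations for $I$ and $A$ contain an additive non-negative source term, so they require an integrating-factor argument (or, equivalently, a first-crossing-time argument at the boundary).

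First I would handle $X$ by writing
\[
\frac{dX}{dt}=X\left[r\left(1-\frac{X}{K}\right)-\alpha S-\frac{\phi\alpha I}{a+X}\right]
=X\,\psi_1(X,S,I),
\]
so Lemma~\ref{lemma1} yields $X(t)=X(0)\exp\!\bigl(\int_0^t\psi_1\,ds\bigr)\ge 0$ for $X(0)\ge 0$. Similarly,
\[
\frac{dS}{dt}=S\left[m_1\alpha X-\lambda A-d-\frac{\gamma A}{1+A}\right]=S\,\psi_2(X,A),
\]
and Lemma~\ref{lemma1} gives $S(t)\ge 0$ whenever $S(0)\ge 0$.

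Next, for $A$, I would view the last equation as a linear ODE in $A$ with a non-negative forcing term and use the integrating factor $e^{\eta t}$:
\[
\frac{d}{dt}\bigl(e^{\eta t}A(t)\bigr)=e^{\eta t}\bigl[\omega+\sigma(S(t)+I(t))\bigr],
\]
so, integrating from $0$ to $t$ and using $S,I\ge 0$ (established in the previous step, at least as long as $I\ge 0$ holds), we get $A(t)\ge e^{-\eta t}A(0)\ge 0$. For $I$ the same idea works: rewrite the equation as
\[
\frac{dI}{dt}+\left[(d+\delta)+\frac{\gamma A}{1+A}-\frac{m_2\phi\alpha X}{a+X}\right]I=\lambda AS,
\]
multiply by the integrating factor $\mu(t)=\exp\!\bigl(\int_0^t\!\bigl[(d+\delta)+\tfrac{\gamma A}{1+A}-\tfrac{m_2\phi\alpha X}{a+X}\bigr]ds\bigr)>0$, and obtain
\[
\mu(t)I(t)=\mu(0)I(0)+\int_0^t\mu(s)\lambda A(s)S(s)\,ds\ge 0,
\]
so $I(t)\ge 0$.

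The only subtle point, and the step I expect to be the main obstacle, is the circularity between the $I$ and $A$ arguments: the integrating factor for $A$ needs $I\ge 0$, while the right-hand side of $I$ needs $A,S\ge 0$. I would resolve this by a standard continuity/minimality argument: let $t^{*}=\sup\{t\ge 0:X,S,I,A\ge 0\text{ on }[0,t]\}$; since all four variables are continuous and non-negative at $t=0$, we have $t^{*}>0$, and at $t=t^{*}$ at least one variable touches zero. The expressions above show that whichever variable hits zero first has a non-negative derivative there ($dX/dt|_{X=0}=0$, $dS/dt|_{S=0}=0$, $dI/dt|_{I=0}=\lambda AS\ge 0$, $dA/dt|_{A=0}=\omega+\sigma(S+I)\ge\omega>0$), contradicting the definition of $t^{*}$ unless $t^{*}=+\infty$. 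This closes the argument and proves $X,S,I,A\ge 0$ for all $t>0$.
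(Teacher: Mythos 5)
Your proof is correct and follows essentially the same route as the paper: Lemma~\ref{lemma1} (exponential representation) for $X$ and $S$, integral/integrating-factor representations for $I$ and $A$, and a first-zero-at-the-boundary argument to break the $I$--$A$ interdependence. Your single first-crossing-time argument at $t^{*}$ is just a cleaner packaging of the paper's nested proof-by-contradiction for $I$ and $A$.
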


\begin{proof}
Let \((X(t),S(t),I(t),A(t))\) be a solution of system\eqref{eq1} 
with its initial conditions \eqref{eq:ic}. We use Lemma~\ref{lemma1}
to prove the positivity of the equations in the system. 
Let us consider \(X(t)\) for \(t\in[0,T]\). We obtain, 
from the first equation of system \eqref{eq1}, that 
\[
{\frac{dX}{dt}}=X\,\left[r\,\left(1-\frac{X}{K}\right)-\alpha\, 
S-{\frac{\phi\alpha I}{a+X}}\right]
\Rightarrow 
{\frac{dX}{X}}=\left[r\,\left(1-\frac{X}{K}\right)
-\frac{\alpha S+\phi\alpha I}{c+X}\right]dt.
\]
Hence,
\[
\ln X=D_0+\int_{0}^{t} \left[r\,\left(1-\frac{X}{K}\right)
-\alpha S-\frac{\phi\alpha I}{a+X}\right]du,
\]
so that
\[
X(t)=D_1e^{\int_{0}^{t}\left[r\,\left(1-\frac{X}{K}\right)
-\alpha S-\frac{\phi\alpha I}{a+X}\right]du}>0
\quad (\because D_1=e^{D_0}).
\]
From the second equation of system \eqref{eq1}, we have
\[
\frac{dS}{dt}=S\left[m_1\alpha X-\lambda A-d-\frac{\gamma A}{1+A}\right]
\Rightarrow \frac{dS}{S}=\left[m_1\alpha X-\lambda A-d-\frac{\gamma A}{1+A}\right]dt.
\]
Hence,
\[
\ln S=K_0+\int_{0}^{t} \left(m_1\alpha X-\lambda A-d-\frac{\gamma A}{1+A}\right)du,
\]
so that
\[
S(t)=K_1e^{\int_{0}^{t} \left(m_1\alpha X-\lambda A-d-\frac{\gamma A}{1+A}\right)du}>0.
\]
To show that $I$ and $A$ are non-negative, consider the following sub-system of \eqref{eq1}:
\begin{equation}
\label{subsystem}
\begin{aligned}
\frac{dI}{dt}&=\frac{m_2\phi\alpha XI}{a+X}+ \lambda AS-(d+\delta)I-\frac{\gamma IA}{1+A},\\
\frac{dA}{dt}&=\omega+\sigma(S+I)-\eta A.
\end{aligned}
\end{equation}
To show the positivity of \(I(t)\), we do the proof by contradiction. 
Suppose there exists \(t_0\in(0,T)\)  such that $I(t_0)=0$, 
$I'(t_0)\leq0$, and \(I(t)>0\) for \(t\in[0,t_0)\). Then, \(A_0>0\) 
for \(t\in[0,t_0)\). If this is not to be the case, then there exists 
\(t_1\in[0,t_0)\) such that $A(t_1)=0$, $A'(t_1)\leq 0$ 
and \(A(t)>0\) for \(t\in[0,t_0)\). 
Integrating the third equation of the system \eqref{eq1} gives
\begin{equation*}
\begin{aligned}
I(t)&=I(0)\,\exp\left(\int_{0}^{t}\left( \frac{m_2\,\phi\,\alpha\,X(\tau)}{a+X(\tau)}
-\frac{\gamma\,\,A(\tau)}{1+A(\tau)}\right)d\tau-(d+\delta)t\right)\\
&+\left[\exp\left(\int_{0}^{t}\left( \frac{m_2\,\phi\,\alpha\,X(\tau)}{a+X(\tau)}
-\frac{\gamma\,\,A(\tau)}{1+A(\tau)}\right)d\tau-(d+\delta)t\right)\right]\\
&\times\left[\int_{0}^{t}\lambda\,A(\tau)S(\tau)d\tau\,
\exp\left((d+\delta)t-\int_{0}^{t}\left( \frac{m_2\,\phi\,\alpha\,X(\tau)}{a+X(\tau)}
-\frac{\gamma\,\,A(\tau)}{1+A(\tau)}\right)d\tau\right)\right]>0, 
\quad\mbox{for}\quad t\in[0,t_1].
\end{aligned}
\end{equation*}
Then, \(A'(t_1)=\gamma+\sigma(S(t_1)+I(t_1))>0\). 
This is a contradiction. Hence, \(I(t)>0\) for all \(t\in[0,t_0)\).
Finally, from the second equation of subsystem \eqref{subsystem}, 
we have
\[
\frac{dA}{dt}=\omega+\sigma(S+I)-\eta A.
\] 
Integration gives
\[
A(t)=A(0)\,e^{\eta t}+e^{\eta t}\,
\int_{0}^{t}\left(\omega+\sigma(S(\tau)
+I(\tau))\right)e^{-\eta t}dt>0,
\]
that is, \(A(t)>0\) for all \(t\in(0,T)\). 
\end{proof}

\begin{theorem}[boundedness of solutions]
\label{bounded}
Every solution of system \eqref{eq1} that start 
in $\mathbb{R}^{4}_+$ is uniformly bounded 
in the region $\mathcal{V}$ defined by
\[
\mathcal{V}=\left\{(X,S,I,A)\in{\mathbb R}_{+}^{4}\colon 0<X+S+I\leq\frac{L}{d},0<A
\leq\frac{\omega d+\sigma L}{\eta d}\right\},\quad\mbox{with\quad}L=\frac{K(r+d)^2}{4r}.
\]
\end{theorem}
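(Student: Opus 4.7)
The plan is to prove boundedness by the standard trick of combining the first three state variables into a single nonnegative quantity and comparing its derivative against a scalar linear differential inequality, then treating the $A$-equation similarly in a second step.

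First I would define $W(t)=X(t)+S(t)+I(t)$ and compute $\dot W$ using system \eqref{eq1}. Most of the cross-terms cancel: the awareness-induced transfer $\lambda AS$ from the $S$ to the $I$ equation disappears, and the predation losses $-\alpha XS$ and $-\phi\alpha XI/(a+X)$ combine with the conversion gains to yield factors of $m_1-1$ and $m_2-1$. Since $m_1,m_2\in(0,1)$ by the biological interpretation of the conversion efficiencies, these combined predation terms are nonpositive and may be dropped. The remaining $\gamma$-saturation terms in $S$ and $I$ are also nonpositive. One is left with the clean estimate
\[
\dot W \;\leq\; rX\Bigl(1-\tfrac{X}{K}\Bigr) - dS - (d+\delta)I \;\leq\; rX\Bigl(1-\tfrac{X}{K}\Bigr) - d(S+I).
\]

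Next I would add $dW$ to both sides to obtain
\[
\dot W + dW \;\leq\; rX\Bigl(1-\tfrac{X}{K}\Bigr) + dX \;=\; (r+d)X - \tfrac{r}{K}X^{2}.
\]
The right-hand side is a downward parabola in $X$, maximized at $X=K(r+d)/(2r)$ with maximum value $K(r+d)^{2}/(4r)=L$. Hence $\dot W+dW\leq L$ holds for every $t$. A standard differential-inequality (Gronwall) comparison then gives
\[
W(t) \;\leq\; W(0)e^{-dt} + \tfrac{L}{d}\bigl(1-e^{-dt}\bigr),
\]
so $\limsup_{t\to\infty} W(t)\leq L/d$ and $W(t)\leq\max\{W(0),L/d\}$ for all $t$, which yields the first component of the region $\mathcal V$.

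For the awareness variable, I would use $S+I\leq W\leq L/d$ in the fourth equation of \eqref{eq1} to obtain the scalar inequality
\[
\dot A + \eta A \;\leq\; \omega + \sigma\cdot\tfrac{L}{d} \;=\; \tfrac{\omega d+\sigma L}{d}.
\]
Another Gronwall comparison produces $A(t)\leq A(0)e^{-\eta t}+\tfrac{\omega d+\sigma L}{\eta d}(1-e^{-\eta t})$, giving the second bound in $\mathcal V$ and uniform boundedness of the full solution. The only delicate step is the algebraic cancellation and sign-checking in the derivation of $\dot W$, where one must be careful to exploit $m_1,m_2<1$ so that the predation terms drop out with the correct sign; the remainder is routine quadratic optimization and linear comparison.
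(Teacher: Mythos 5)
Your argument is correct, but it uses a different decomposition than the paper. You work with the plain sum $W=X+S+I$ and dispose of the predation/conversion terms by invoking $m_1,m_2\in(0,1)$, so that $(m_1-1)\alpha XS$ and $(m_2-1)\frac{\phi\alpha XI}{a+X}$ are nonpositive; the paper instead takes the weighted quantity $W=X+\frac{1}{m_1}S+\frac{1}{m_1}I$, for which the susceptible terms cancel exactly and the infected terms appear with the factor $-\frac{m_1-m_2}{m_1}\frac{\phi\alpha XI}{a+X}$, so the differential step needs only the stated hypothesis $m_1>m_2$ rather than $m_i<1$. From there both proofs coincide: $\dot W+dW\le (r+d)X-\frac{r}{K}X^2\le L=\frac{K(r+d)^2}{4r}$, a Gronwall comparison, and a second linear comparison for $A$. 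Note, however, that your extra assumption is not really a loss relative to the paper: to convert its bound on the weighted sum into the claimed bound $X+S+I\le L/d$, and to replace $\sigma Lm_1/d$ by $\sigma L/d$ in the $A$-inequality, the paper implicitly needs $m_1\le 1$ as well (and then $m_2<m_1\le1$ is automatic), so both arguments ultimately rest on conversion efficiencies not exceeding one — an assumption consistent with the paper's parameter values ($m_1=0.8$, $m_2=0.6$) though never stated formally. Your treatment of the $A$-equation, using $S+I\le L/d$ directly, is in fact slightly cleaner than the paper's, and your explicit bound $W(t)\le W(0)e^{-dt}+\frac{L}{d}\bigl(1-e^{-dt}\bigr)$ with $\limsup_{t\to\infty}W(t)\le L/d$ matches the asymptotic sense in which the paper's region $\mathcal V$ is to be understood.
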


\begin{proof}
We choose $m=\max\left\{m_1,m_2\right\}=m_1$, 
because in our assumptions we assume that $m_1>m_2$.
Now, at any time $t$, let $W=X+\frac{1}{m_1}S+\frac{1}{m_1}I$. 
Then the time derivative of $W$ along the solution of system \eqref{eq1} is given by
\begin{eqnarray*}
\frac{dW(t)}{dt}
&=& r X\left[1 - \frac{X}{K}\right] - \alpha XS- \frac{\phi \alpha XI}{a+X}
+\alpha XS-\frac{\lambda }{m_1}AS+\frac{\lambda }{m_1}AS\\
& +& \frac{m_2}{m_1}\frac{\phi\alpha XI}{a+X}-\frac{d}{m_1}S
- \left(\frac{d+\delta}{m_1}\right)I-(S+I)\frac{\gamma}{m_1}\frac{A}{1+A}\\
&=&r X\left[1 - \frac{X}{K}\right] - \left(\frac{m_1-m_2 }{m_1}\right)\frac{\phi\alpha XI}{a+X}\\
&-&\left(\frac{d+\delta}{m_1}\right)I-\frac{d}{m_1}S-(S+I)\frac{\gamma}{m_1}\frac{A}{1+A}\\
&\leq& r X\left[1-\frac{X}{K}\right] -\frac{d}{m_1}S-\frac{d}{m_1}I\\
&=&r X(1-\frac{X}{K})-\frac{d}{m_1}(S+I)-dX+dX\\
&=&r X\left(1-\frac{X}{K}\right)-\left(\frac{d}{m_1}S+\frac{d}{m_1}I+dX\right)+dX.
\end{eqnarray*}
Then, we have from the above that
\[
\frac{dW}{dt}\leq r X\left(1-\frac{X}{K}\right)-dW+dX,
\]
that is,
\[
\frac{dW}{dt}+dW\leq r X\left(1-\frac{X}{K}\right)+dX
=(r+d)X-\frac{rX^2}{K}=:\Phi(X).
\]
Now, $\Phi(X) $ is a concave parabola for which its maximum value is attained 
at the vertex whose abscissa is $X_v=\frac{K(r+d)}{2r}$. Therefore,
it follows
\[
\Phi(X)\leq \Phi(X_v)=\frac{1}{4r}K(r+d)^2=\Phi^*.
\]
Thus, we have a constant $L=\frac{K(r+d)^2}{4r}$ such that
\[
\frac{dW}{dt}+dW\leq L.
\]
To solve this, we apply the differential inequality
\begin{equation*}
\begin{split}
e^{dt}\left(\frac{dW}{dt}+dW\right) \leq e^{dt}L
&\Rightarrow \frac{d}{dt}\left(We^{dt}\right) \leq e^{dt}L\\
&\Rightarrow We^{dt} \leq \int(Le^{dt})dt+C\\
&\Rightarrow W(X,S,I) \leq e^{-dt}\left[dLe^{dt}\right]+Ce^{-dt}\\
&\Rightarrow W(X(0),S(0),I(0)) \leq \frac{L}{d}+C\\
&\therefore W(X,S,I) \leq\frac{L}{d}\left(1-e^{-dt}\right)
+W(X(0),S(0),I(0))e^{-dt}.
\end{split}
\end{equation*}
Hence, we get
\[
0<W(X,S,I)\leq\frac{L}{d}(1-e^{-dt})+W(X(0),S(0),I(0))e^{-dt}
\]
and, for $t\rightarrow\infty$, we have
\[
0<X+S+I\leq\frac{L}{d}.
\]
From the fourth equation of system \eqref{eq1}, we have
\begin{eqnarray*}
\frac{dA}{dt} &=&\omega+ \sigma(S+I) - \eta A\\
&\leq& \omega+\sigma\left(\frac{Lm_1}{d}\right)-\eta A\\
&=&\frac{d\omega+\sigma L}{d}-\eta A\\
&\Leftrightarrow& \frac{dA}{dt}+\eta A\leq \frac{d\omega+\sigma L}{d}.
\end{eqnarray*}
Again, applying the method of differential inequality, we have
\begin{equation*}
\begin{split}
e^{\eta t}\left(\frac{dA}{dt}+\eta A\right)
\leq e^{\eta t}\left(\frac{d\omega+\sigma  L}{d}\right)
&\Rightarrow \frac{d}{dt}\left(Ae^{\eta t}\right)
\leq e^{\eta t}\left(\frac{d\omega+\sigma  L}{d}\right)\\
&\Rightarrow Ae^{\eta t} 
\leq \int\left(\frac{d\omega+\sigma L}{d}\right)e^{\eta t}dt+C\\
&\Rightarrow A(t) \leq \left(\frac{d\omega+\sigma L}{\eta d}\right)+Ce^{-\eta t}\\
&\Rightarrow A(0) \leq \frac{\omega d+L\sigma}{\eta d}+C.
\end{split}
\end{equation*}
This results in
\[
0<A\leq\frac{\omega d+\sigma L}{\eta d}+Ce^{-\eta t}.
\]
Thus, for $t\rightarrow\infty$, we obtain that
\[
0<A\leq \frac{\omega d+\sigma L}{\eta d}.
\]
Hence, all solutions of \eqref{eq1} originating 
in $\mathbb{R}^{4}_+$ are confined to the region
\[
\mathcal{V}=\left\{(X,S,I,A)\in\mathbb{R}^{4}_+\colon 0<X+S+I\leq\frac{L}{d}
+\epsilon,0<A\leq \frac{\omega d+\sigma L}{\eta d}\right\}
\]
for any $\epsilon>0$ and for $t\rightarrow\infty$.
Thus, the system \eqref{eq1} is always uniformly bounded.
\end{proof}


\section{Equilibria assessment} 
\label{sec4}

To get the fixed points of our system, 
we put the right hand sides of system \eqref{eq1} equal to zero:
\begin{equation}
\label{eq3}
\begin{cases}
r X\left(1 - \frac{X}{K}\right) - \alpha XS - \frac{\phi \alpha XI}{a+X}=0,\\
m_1\alpha XS - \lambda AS - dS - \frac{\gamma SA}{1+A}=0 ,\\
\frac{m_2\phi \alpha XI}{a+X} + \lambda AS - (d+\delta)I- \frac{\gamma IA}{1+A}=0,\\
\omega + \sigma(S+I) - \eta A=0.
\end{cases}
\end{equation}
We conclude that system \eqref{eq1} has five possible equilibrium points,
denoted by $E_i$, $i = 0, 1, 2, 3$, and $E^*$:
\begin{itemize}
\item[(i)] The axial equilibrium point $E_0 = \left(0,0,0,\frac{\omega}{\eta}\right)$,
which always exists.
\item[(ii)] The pest free equilibrium point $E_1=\left(K,0,0,\frac{\omega}{\eta}\right)$, 
which, again, always exists.
\item [(iii)] The boundary equilibrium point $E_2=\left(0,S_1,I_1,A_1\right)$,
where 
\[
S_1=\frac{\left((d+\delta+\gamma)A+d+\delta\right)(\eta A-\omega)}{\left(A^2\lambda+(d+\delta
+\gamma+\lambda)A+d+\delta\right)\sigma},
\]
\[
I_1=\frac{(A\eta-\omega)\lambda A(1+A)}{(A^2\lambda+(d+\delta+\gamma+\lambda)A+d+\delta)\sigma},
\]
and $A_1$ is the positive root of the equation \(\lambda A^2+(\gamma+d+\lambda)A+d=0\). 
Unfortunately, this quadratic equation has no positive roots and, hence, 
such an equilibrium does not occur.

\item[(iv)] The healthy pest free equilibrium point \(E_3=\left(\bar{X},0,\bar{I},\bar{A}\right)\),
where $\bar{X}$, $\bar{I}$ and $\bar{A}$ are computed as follows.
If we set $S=0$ in system \eqref{eq3}, then 
\begin{equation}
\label{eq4}
\begin{cases}
r X\left[1 - \frac{X}{K}\right] - \frac{\phi \alpha XI}{a+X}=0,\\
\frac{m_2 \phi\alpha XI}{a+X} - (d+\delta+\frac{\gamma A}{1+A})I=0,\\
\omega+ \sigma I - \eta A=0.
\end{cases}
\end{equation}
From the first equation of the nonlinear system \eqref{eq4}, we have
\[
r \left[1 - \frac{X}{K}\right] - \frac{\phi \alpha I}{a+X}=0
\Rightarrow \frac{\phi\alpha I}{a+\bar{X}}=\frac{r(K-\bar{X})}{K}
\]
\[
\Rightarrow \phi\alpha I=\frac{r(a+\bar{X})(K-\bar{X})}{K}
	\Rightarrow \bar{I}=\frac{r(a+\bar{X})(K-\bar{X})}{\phi\alpha K}
\]
and, from the third equation  of system \eqref{eq4}, we get
\[
\omega+\sigma I-\eta A=0 \Rightarrow \eta A
=\omega+\sigma I\Rightarrow \bar{A}
=\frac{\omega+\sigma \bar{I}}{\eta}
=\frac{r(a+\bar{X})(K-\bar{X})\sigma+\omega K\alpha\phi}{K\alpha\eta\phi}.
\]
Finally, solving for $X$ from the second equation of system \eqref{eq4}, 
we see that $\bar{X}$ is the positive root of equation
\begin{equation}
\label{charc1}
X^3+a_1X^2+a_2X+a_3=0,
\end{equation}
where
\begin{equation*}
\begin{aligned}
a_1&={\frac { \left( \alpha\,\phi\,m_{{2}}-d-\delta-\gamma \right) K-a
\left( \alpha\,\phi\,m_{{2}}-2\,d-2\,\delta-2\,\gamma \right) }{
-\alpha\,\phi\,m_{{2}}+d+\delta+\gamma}},\\
a_{2}&={\frac { \left(  \left( \alpha\,\phi\,m_{{2}}-2\,d-2\,\delta-2\,
\gamma\right) K+a \left( d+\delta+\gamma \right)  \right) a}{-\alpha\,
\phi\,m_{{2}}+d+\delta+\gamma}}
-{\frac {K \left(  \left( -\alpha\,\phi\,m_{{2}}+d+\delta+\gamma
\right) \omega+\eta\, \left( -\alpha\,\phi\,m_{{2}}+d+\delta \right) 
\right) \phi\,\alpha}{r\sigma\, \left( -\alpha\,\phi\,m_{{2}}+d+
\delta+\gamma \right) }},\\
a_{3}&=-{\frac {K{a}^{2}r \left( d+\delta+\gamma \right) \sigma+K\phi\,a
\alpha\, \left(  \left( d+\delta+\gamma \right) \omega+\eta\, \left( d
+\delta \right)  \right) }{r\sigma\, \left( -\alpha\,\phi\,m_{{2}}+d
+\delta+\gamma \right)}}.
\end{aligned}
\end{equation*}
The model system \eqref{eq1} may have one or more healthy pest free equilibrium points 
$E_3$, depending on the positive solutions of equation\eqref{charc1}. The healthy pest 
free equilibrium point \(E_3\) exists only if the equation \eqref{charc1} 
has a positive root \(\bar{X}\) and \(K-\bar{X}>0\).

\item[(v)] Our model system \eqref{eq1} has an equilibrium point
in the presence of pest, $X(t)\geq0$, $S(t)\geq0$, $I(t)\geq0$, $A(t)\geq0$,
called the interior or coexistence or endemic equilibrium point, 
which is denoted by $E^*=(X^*,S^*,I^*,A^*)\neq0$. 
Note that $E^*$ is the steady state solution where pest persist in the crop biomass population. 
It is obtained by setting each equation of system \eqref{eq1} equal to zero, that is,
\[
\frac{dX}{dt}=\frac{dS}{dt}=\frac{dI}{dt}=\frac{dA}{dt}=0.
\]
From the second equation of system \eqref{eq3}, we get
\(\left(m_1 \alpha X-\lambda A -d-\frac{\gamma A}{1+A}\right)S=0\), that is, 
$m_1 \alpha X-\lambda A -d-\frac{\gamma A}{1+A}=0$,
from which we obtain
\[
X^*=\frac{\lambda A^2+(d+\lambda+\gamma)A+d}{m_1\alpha(1+A)}.
\]
From the first equation  of system \eqref{eq3}, we have
\begin{equation}
\label{star1}
\begin{split}
r\left(1-\frac{X}{K}\right)-\alpha S-\frac{\phi \alpha I}{a+X}=0
&\Rightarrow \alpha S+\frac{\phi\alpha I}{a+X}=r\left(1-\frac{X}{K}\right)=\frac{r(K-X)}{K}\\
&\Rightarrow \alpha S+\frac{\phi\alpha I}{a+X}=\frac{r(K-X)}{K}\\
&\Rightarrow \alpha(a+X)S+\phi\alpha I=\frac{r(K-X)(a+X)}{K}
\end{split}
\end{equation}
and from the last equation  of system \eqref{eq3} we get
\begin{equation}
\label{star2}
\alpha(a+X)S+\phi\alpha I=\frac{r(K-X)(a+X)}{K}.
\end{equation}
Solving equations \eqref{star1} and \eqref{star2} simultaneously, one obtains
\begin{equation*}
\begin{aligned}
S^*={\frac {r \left( a+X \right)  \left( K-X \right) \sigma
-K\phi\,\alpha\, \left( \eta\,A-\omega \right) }{K\alpha\,
\sigma\, \left( X+a-\phi\right) }},\\
I^*={\frac { \left( a+X \right)  \left( AK\alpha\,\eta-K\alpha\,\omega-Kr
\sigma+\sigma\,rX \right) }{K\alpha\,\sigma\, \left( X+a-\phi \right) }}.
\end{aligned}
\end{equation*}
Therefore,
\(E^{*}=(X^{*},S^{*},I^{*},A^{*})\) is the coexistence steady state with
\begin{equation*}
\begin{split}
X^{*}&={\frac{\lambda\, A^{2}+\left(d+\lambda+\gamma\right)\,A+d}{m_1\,\alpha\left(1+A\right)}},\\
S^{*}&=\frac{r(a+X)(K-X)\sigma-K\phi\alpha(A\eta-\omega)}{K\sigma\alpha(X+a-\phi)},\\
I^{*}&=\frac{(a+X)(((A\eta-\omega)\alpha-\sigma r)K+\sigma rX)}{K\sigma\alpha(X+a-\phi)}
\end{split}
\end{equation*}
and $A^*$ a positive root of equation
\begin{equation}
\label{char2}
f(A)=A^6+a_1A^5+a_2A^4+a_3A^3+a_4A^2+a_5A+a_6=0,
\end{equation}
whose coefficients are given by
\begin{equation*}
\begin{split}
a_1&={\frac {3\,{\lambda}^{2}r\sigma-r\sigma\, \left(  \left(  
\left( K-a\right) m_{{1}}+m_{{2}}\phi \right) \alpha-3\,d
-\delta-3\,\gamma\right) \lambda}{{\lambda}^{2}r\sigma}}
+\frac {m_{{1}}\eta\, \left( \phi\, \left( m_{{1}}-m_{{2}} \right) 
\alpha+d+\delta+\gamma \right) {\alpha}^{2}}{{\lambda}^{2}r\sigma},\\
a_2&={\frac {3\,\sigma\,{\lambda}^{3}r-3\,\sigma\,r \left(  \left(  \left( 
K-a \right) m_{{1}}+m_{{2}}\phi \right) \alpha-3\,d-\delta-2\,\gamma
\right) {\lambda}^{2}}{\sigma\,{\lambda}^{3}r}}\\
&\quad -{\frac {m_{{1}}K \left( \phi\, \left( m_{{1}}-m_{{2}} \right) 
\left( \omega-3\,\eta \right) \alpha+\omega\, \left( d+\delta+\gamma
\right) + \left( -3\,d-3\,\delta-2\,\gamma \right) \eta+\sigma\,r
\left( m_{{1}}a-m_{{2}}\phi \right)  \right) {\alpha}^{2}}{{\lambda}^{2}r\sigma}}\\
&\quad +{\frac { \left( -2\, \left( K-a \right)  \left( d+\delta/2+\gamma
\right) m_{{1}}-2\,m_{{2}}\phi\, \left( d+\gamma \right)  \right) \alpha
+3\, \left( d+\gamma \right)  \left( d+2/3\,\delta+\gamma\right) }{{\lambda}^{2}}}\\
&\quad +{\frac {K\eta\, \left(  \left( a \left( d+\delta+\gamma \right) m_{{1}}
-m_{{2}}\phi\, \left( d+\gamma \right)  \right) \alpha
+ \left( d+\gamma \right)  \left( d+\delta+\gamma \right)  \right) 
m_{{1}}{\alpha}^{2}}{\sigma\,{\lambda}^{3}r}},\\
\end{split}
\end{equation*}
\begin{equation*}
\begin{split}
a_3&=-3\,{\frac {{m_{{1}}}^{2}K{\alpha}^{3} \left( \phi\, \left( \omega-
\eta \right) \lambda+1/3\,a \left( \omega\, \left( d+\delta+\gamma
\right) -3\,\eta\, \left( d+\delta+2/3\,\gamma \right)  \right) 
\right) }{\sigma\,{\lambda}^{3}r}}\\
&\quad +3\,{\frac {m_{{1}}K{\alpha}^{3}\phi\, \left(  \left( \lambda+d/3+
\gamma/3 \right) \omega- \left( \lambda+d+2/3\,\gamma \right) \eta
\right) m_{{2}}}{\sigma\,{\lambda}^{3}r}}\\
&\quad +3\,{\frac {m_{{1}}K \left( \sigma\,ar \left( \lambda+d/3+\delta/3+
\gamma/3 \right) m_{{1}}- \left( \lambda+d/3+\gamma/3 \right) \phi\,m_
{{2}}r\sigma+ \left( \omega/3-\eta/3 \right) {\gamma}^{2} \right) {
\alpha}^{2}}{{\sigma \lambda }^{3}r}}\\
&\quad -3\,{\frac {m_{{1}}K \left(  \left(  \left( 2/3\,\omega-4/3\,\eta
\right) d+ \left( 2/3\,\omega-\eta/3 \right) \lambda+1/3\,\delta\,
\left( \omega-2\,\eta \right)  \right) \gamma+ \left( \omega/3-\eta
\right) d \right) {\alpha}^{2}}{{\sigma \lambda }^{3}r}}\\
&\quad +{\frac {- \left( K-a \right)  \left( {d}^{2}+ \left( \delta+2\,\gamma
+6\,\lambda \right) d+\gamma\, \left( \delta+3\,\gamma+4\,\lambda
\right)  \right) m_{{1}}-\phi\, \left( {d}^{2}+6\,\lambda\,d
+3\,{\lambda}^{2} \right) m_{{2}}}{3\,{\lambda}^{3}r}}\\
&\quad +{\frac {{\lambda}^{3}+ \left( 9\,d+3\,\delta+3\,\gamma \right){\lambda}^{2}
+ \left( 3\,{\gamma}^{2}+ \left( 12\,d+4\,\delta \right) 
\gamma+9\,{d}^{2}+6\,d\delta \right) \lambda+ \left( d+\gamma \right)^{2}\left( 
d+\delta+\gamma \right) }{{\lambda}^{3}}},\\
a_4&={\frac { \left(  \left( a \left( \omega-\eta \right) d
+\phi\, \left( \omega-\eta/3 \right) \lambda+a\gamma+\delta\, 
\left( \omega-\eta\right)  \right) m_{{1}}- \left( \omega-\eta/3
+\lambda+2/3\,\omega\,\gamma \right) \phi\,m_{{2}} \right) 
K{\alpha}^{3}}{\sigma\,{\lambda}^{3}r}}\\
&\quad -3\,{\frac {m_{{1}}K \left( \sigma\,ar \left( \lambda+d+\delta\right) m_{{1}}
- \left( \lambda+d+2/3\,\gamma \right) \phi\,m_{{2}}r\sigma+\delta\, 
\left( \omega-\eta \right) +\delta\, \left( \omega-\eta/3 \right) 
\lambda \right) {\alpha}^{2}}{\sigma\,{\lambda}^{3}r}}\\
&\quad -3\,{\frac { \left(  \left( K-a \right)  \left( 6\,\lambda+3\,\delta
+4\,\gamma \right) d+ \left( {\lambda}^{2}+ \left( 3\,\delta+2\,
\gamma\right) \lambda+2\,\delta\,\gamma+{\gamma}^{2} \right) m_{{1}}+\phi\,
\left( 3\,{d}^{2}+ \left( 6\,\lambda+4\,\gamma \right) d \right) 
m_{{2}} \right) \alpha}{{\lambda}^{3}}}\\
&\quad -9\,{\frac {\sigma\, \left( {d}^{3}+ \left( 3\,\lambda+\delta+2\,
\gamma \right) {d}^{2}+ \left( {\lambda}^{2}+ \left( 2\,\delta+2\,
\gamma \right) \lambda+4/3\,\delta\,\gamma+{\gamma}^{2} \right) d+1/3
\,\delta\, \left( \lambda+\gamma \right) ^{2} \right) }{{\lambda}^{3}}},\\
a_5&=-{\frac {K \left(  \left(  \left( \lambda\,\phi+3\, \left( d+\delta
+\gamma/3 \right) a \right) \omega-a\eta\, \left( d+\delta \right) 
\right) m_{{1}}-m_{{2}} \left(  \left( \lambda+3\,d+\gamma \right) 
\omega-d\eta \right) \phi \right) {\alpha}^{3}m_{{1}}}{\sigma\,{
\lambda}^{3}r}}\\
&\quad -{\frac {m_{{1}} \left( \sigma\,ar \left( \lambda+3\,\delta+\gamma
\right) m_{{1}}-m_{{2}}r\phi\, \left( \lambda+3\,d \right) \sigma+
\left( \delta\, \left( 3\,\omega-\eta \right) +\omega\, \left( 
\lambda+2\,\gamma \right)  \right) d \right) K{\alpha}^{2}}{\sigma\,{
\lambda}^{3}r}}\\
&\quad -2\,{\frac { \left(  \left( K-a \right)  \left( 3/2\,{d}^{2}+ \left( 
\lambda+3/2\,\delta+\gamma \right) d+1/2\,\delta\, \left( \lambda+
\gamma \right)  \right) m_{{1}}+m_{{2}}\phi\,d \left( \lambda+3/2\,d
+\gamma \right)  \right) \alpha}{{\lambda}^{3}}}\\
&\quad + 3\,{\frac { \left( {d}^{2}+ \left( \lambda+\delta+\gamma \right) d
+2/3\,\delta\, \left( \lambda+\gamma \right)  \right) d}{{\lambda}^{3}}},\\
a_6&={\frac {m_{{2}}d\alpha\, \left( K{\alpha}^{2}\omega\,m_{{1}}+K\alpha\,
r\sigma\,m_{{1}}-dr\sigma \right) \phi}{\sigma\,{\lambda}^{3}r}}
-{\frac { \left(  \left( adm_{{1}}+a\delta\,m_{{1}} \right) \alpha+d
\left( d+\delta \right)  \right)  \left( K{\alpha}^{2}\omega\,m_{{1}}
+K\alpha\,r\sigma\,m_{{1}}-dr\sigma \right) }{\sigma\,{\lambda}^{3}r}}.
\end{split}
\end{equation*}
The coexistence equilibrium point \(E^{*}\) exists only if the characteristic 
equation \eqref{char2} has a positive root in \(A\) with 
\(A>\frac{\alpha\,\omega+r\,\sigma}{\alpha\,\eta}\).
\end{itemize}


\section{Stability of the equilibria}
\label{sec5}

The stability analysis is done by linearization 
of the non-linear system \eqref{eq1}. We write the Jacobian matrix $J$ at the fixed 
points of the system and compute the characteristic equation. 
Then, the stability of the equilibrium point is studied depending on the eigenvalues 
of the corresponding Jacobian, which are functions of the model parameters.
The Jacobian matrix for system \eqref{eq1}, at a steady 
state \((X,S,I,A)\), is given by
\begin{equation}
\label{eq:JM:g}
J(X,S,I,A) =
\left[ 
\begin{array}{cccc} 
r \left( 1-{\frac {2\,X}{K}} \right) -
\alpha\,S-{\frac {\phi\,\alpha\,a\,I}{ \left( a+X \right) ^{2}}}
&-\alpha\,X&-{\frac {\phi\,\alpha\,X}{a+X}}&0\\ \noalign{\medskip}m_{{1}}
\alpha\,S&m_{{1}}\alpha\,X-\lambda\,A-d-{\frac {\gamma\,A}{1+A}}&0&
-\lambda\,S-{\frac {\gamma\,S}{ \left( 1+A \right) ^{2}}}\\ 
\noalign{\medskip}{\frac {m_{{2}}\phi\,\alpha\,aI}{ \left( a+X
\right) ^{2}}}&\lambda\,A&{\frac {m_{{2}}\phi\,\alpha\,X}{a+X}}
-d-\delta-{\frac {\gamma\,A}{1+A}}&-\lambda\,S-{\frac {\gamma\,I}{ 
\left(1+A \right) ^{2}}}\\ 
\noalign{\medskip}0&\sigma&\sigma&-\eta
\end{array} 
\right]. 
\end{equation}

\begin{theorem}[stability of the crop-pest free equilibrium]
The system is always unstable around the crop-pest free equilibrium point $E_0$.
\end{theorem}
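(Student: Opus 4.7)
The plan is to evaluate the Jacobian \eqref{eq:JM:g} at $E_{0}=(0,0,0,\omega/\eta)$ and extract its eigenvalues. Substituting $X=S=I=0$ into \eqref{eq:JM:g} kills almost every off-diagonal entry in the first three rows: the terms $-\alpha X$, $-\phi\alpha X/(a+X)$, $m_{1}\alpha S$, $m_{2}\phi\alpha a I/(a+X)^{2}$, $\lambda A$ (multiplying zero), and the $S$-factors in column 4 all vanish. What remains is a (block) lower-triangular matrix, so the characteristic polynomial factors and the eigenvalues can be read directly from the diagonal.

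After substitution, the four diagonal entries are
\[
r,\quad -\Bigl(d+\tfrac{\lambda\omega}{\eta}+\tfrac{\gamma\omega}{\eta+\omega}\Bigr),\quad -\Bigl(d+\delta+\tfrac{\gamma\omega}{\eta+\omega}\Bigr),\quad -\eta,
\]
using $A=\omega/\eta$ so that $\gamma A/(1+A)=\gamma\omega/(\eta+\omega)$. Since all model parameters are positive, the last three eigenvalues are strictly negative, but the first eigenvalue equals $r>0$. Hence $J(E_{0})$ has an eigenvalue with strictly positive real part, so the linearization principle guarantees that $E_{0}$ is unstable (in fact a saddle, with the unstable direction aligned with the $X$-axis corresponding to logistic regrowth of the crop from the trivial plant state).

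I do not expect any genuine obstacle here: the argument is essentially a one-line computation once the zeros are spotted, and the only mild pitfall is to confirm that the third column of the Jacobian evaluated at $E_{0}$ does not contaminate the first column through a non-triangular coupling; a quick inspection of rows 1, 2, and 4 shows $J_{13}=J_{14}=J_{21}=J_{41}=0$, which is what makes the matrix block-triangular and the spectrum transparent. Biologically, the positive eigenvalue $r$ simply encodes the fact that in the absence of pests the crop biomass grows, so the empty-field state cannot be stable, and this conclusion holds unconditionally on the remaining parameters, which is exactly the ``always unstable'' claim.
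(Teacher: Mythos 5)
Your proposal is correct and follows essentially the same route as the paper: evaluate the Jacobian at $E_0=(0,0,0,\omega/\eta)$, note its triangular structure, and read off the eigenvalues $r$, $-\bigl(d+\tfrac{\lambda\omega}{\eta}+\tfrac{\gamma\omega}{\eta+\omega}\bigr)$, $-\bigl(d+\delta+\tfrac{\gamma\omega}{\eta+\omega}\bigr)$, $-\eta$, concluding instability from the positive eigenvalue $r$. The computations agree with the paper's, so there is nothing to add.
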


\begin{proof}
The Jacobian matrix \eqref{eq:JM:g} at the crop-pest free equilibrium $E_0$ is given by
\[
J\left(0,0,0,\frac{\omega}{\eta}\right) 
= \left[ 
\begin{array}{cccc} 
r&0&0&0\\ 
\noalign{\medskip}0&-{\frac {
\lambda\,\omega}{\eta}}-d-{\frac {\gamma\,\omega}{\eta+\omega}}&0&0\\ 
\noalign{\medskip}0&{\frac {\lambda\,\omega}{\eta}}&-d-\delta-{
\frac {\gamma\,\omega}{\eta+\omega}}&0\\ \noalign{\medskip}0&\sigma&
\sigma&-\eta
\end{array} 
\right],
\]
whose characteristic equation is 
\[
\left|\rho-J(E_0)\right|=\left| \begin {array}{cccc} {\rho}-r&0&0&0\\ 
\noalign{\medskip}0&{\rho}+\left({\frac{\lambda\,\omega}{\eta}}
+d+{\frac {\gamma\,\omega}{\eta+\omega}}\right)&0&0\\ 
\noalign{\medskip}0&-{\frac {\lambda\,\omega}{\eta}}&{\rho}
+\left(d+\delta+{\frac {\gamma\,\omega}{\eta+\omega}}\right)&0\\ 
\noalign{\medskip}0&-{\sigma}&-{\sigma}&{\rho}+\eta\end {array} \right|=0,
\]
that is,
\[
\left( \rho-r \right)  \left( \rho
+{\frac {\lambda\,\omega}{\eta}}+d
+{\frac {\gamma\,\omega}{\eta+\omega}} \right)  \left( \rho
+d+\delta+{\frac {\gamma\,\omega}{\eta+\omega}}\right)\left( 
\rho+\eta \right)=0.
\]
The corresponding eigenvalues are:
\[
\rho_1=r>0, \quad
\rho_2=-\eta<0, \quad
{\rho}_{3}=-\left({\frac{\lambda\,\omega}{\eta}}
+d+{\frac {\gamma\,\omega}{\eta+\omega}}\right)<0,\quad 
{\rho}_{4}=-\left(d+\delta+{\frac{\gamma\,\omega}{\eta+\omega}}\right)<0.
\]
Since one eigenvalue is positive, \(\rho_1=r>0\), 
the axial equilibrium \(E_0\) is always unstable.
\end{proof}

\begin{theorem}[stability of the pest free equilibrium]
The pest free steady state $E_1$ is locally asymptotically stable 
if the two critical parameters \(R_{0}\) and \(R_{1}\), 
\begin{equation}
\label{eq:R0:R1}
\begin{split}
R_{0}&:={\frac{m_{{1}}\alpha\,K\,\eta\,\left(\eta
+\omega\right)}{\lambda\,\omega\left(\eta
+\omega\right)+\eta\,\gamma\,\omega+d\eta\left(\eta+\omega\right)}},\\ 
R_{1}&:={\frac{m_{{2}}\phi\,\alpha\,K\,(\eta+\omega)}{(a+K)\,\left(d+\delta\right)\,
\left(\eta+\omega\right)+\left(a+K\right)\gamma\,\omega}},
\end{split}
\end{equation}
satisfy \(R_{0}<1\) and \(R_{1}<1\). Otherwise, $E_1$ is unstable.	
\end{theorem}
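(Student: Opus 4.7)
The plan is to follow the same linearization strategy used for $E_0$: substitute $E_1=(K,0,0,\omega/\eta)$ into the general Jacobian \eqref{eq:JM:g}, extract its spectrum, and translate the negativity of every eigenvalue into the two dimensionless inequalities $R_0<1$ and $R_1<1$.

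First I would evaluate $J(E_1)$ entry by entry. The key simplification is that $S=I=0$ at $E_1$, so every entry carrying a factor of $S$ or $I$ vanishes. Writing $A=\omega/\eta$ and noting that $\gamma A/(1+A)=\gamma\omega/(\eta+\omega)$, the Jacobian collapses to a sparse block form: the first column has only $-r$ on the diagonal (all other entries are zero because they carry $S$ or $I$), the $(2,3)$ and $(2,4)$ entries vanish, and the $(3,4)$ entry also vanishes. So $J(E_1)$ is (after noting the zeros) essentially lower block-triangular with diagonal blocks $-r$, $J_{22}$, $J_{33}$, $-\eta$, where
\[
J_{22}=m_1\alpha K-\frac{\lambda\omega}{\eta}-d-\frac{\gamma\omega}{\eta+\omega},
\qquad
J_{33}=\frac{m_2\phi\alpha K}{a+K}-d-\delta-\frac{\gamma\omega}{\eta+\omega}.
\]

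Next I would expand $\det(\rho I-J(E_1))$ along the first column (only $\rho+r$ contributes) and then observe that the remaining $3\times 3$ block is lower triangular, so the characteristic polynomial factors as
\[
(\rho+r)(\rho-J_{22})(\rho-J_{33})(\rho+\eta)=0.
\]
This gives four real eigenvalues: $\rho_1=-r<0$, $\rho_4=-\eta<0$ are automatic, so local asymptotic stability is equivalent to $J_{22}<0$ and $J_{33}<0$.

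Finally I would rewrite each of these inequalities by clearing denominators. For $J_{22}<0$, multiplying through by $\eta(\eta+\omega)>0$ gives
\[
m_1\alpha K\,\eta(\eta+\omega)<\lambda\omega(\eta+\omega)+\gamma\omega\eta+d\eta(\eta+\omega),
\]
which is exactly $R_0<1$ with $R_0$ as in \eqref{eq:R0:R1}. Similarly, multiplying $J_{33}<0$ by $(a+K)(\eta+\omega)>0$ yields $R_1<1$. Conversely, if either inequality fails then the corresponding eigenvalue is nonnegative and $E_1$ is unstable. I do not expect any real obstacle here: the argument is routine once the Jacobian is seen to be (essentially) triangular; the only mild care needed is keeping the two fractions $\lambda\omega/\eta$ and $\gamma\omega/(\eta+\omega)$ straight when combining over a common denominator to recover the stated form of $R_0$.
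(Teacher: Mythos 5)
Your proposal is correct and follows essentially the same route as the paper: evaluate the Jacobian at $E_1$, exploit the vanishing of the $S$- and $I$-dependent entries to factor the characteristic polynomial as $(\rho+r)(\rho-J_{22})(\rho-J_{33})(\rho+\eta)$, and convert $J_{22}<0$, $J_{33}<0$ into $R_0<1$, $R_1<1$ by clearing denominators. The only cosmetic difference is that you organize the computation via expansion along the first column and the triangular $3\times 3$ minor, whereas the paper writes out the full determinant, but the eigenvalues and the resulting threshold conditions are identical.
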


\begin{proof}
The Jacobian matrix $J(E_1)$, at the pest free equilibrium 
point $E_1=\left(K,0,0,\frac{\omega}{\eta}\right)$, is given by
\[
J\left(K,0,0,\frac{\omega}{\eta}\right)
=\left[ 
\begin{array}{cccc} 
-r&-K\alpha&-{\frac {\phi\,\alpha\,K}{a+K}}&0\\ 
\noalign{\medskip}0&m_{{1}}\alpha\,K-{\frac {\lambda\,\omega}{
\eta}}-d-{\frac {\gamma\,\omega}{\eta+\omega}}&0&0\\ 
\noalign{\medskip}0&{\frac {\lambda\,\omega}{\eta}}&{\frac {m_{{2}}
\phi\,\alpha\,K}{a+K}}-d-\delta-{\frac {\gamma\,\omega}{\eta+\omega}}&0\\ 
\noalign{\medskip}0&\sigma&\sigma&-\eta
\end{array}
\right].
\]
The characteristic equation in $\rho$ at $E_1$ is
\[
|\rho I-J(E_1)|
=\left|
\begin{array}{cccc} 
{\rho}+r&K\alpha&{\frac {\phi\,\alpha\,K}{a+K}}&0\\ 
\noalign{\medskip}0&{\rho}-m_{{1}}\alpha\,K
+{\frac {\lambda\,\omega}{\eta}}+d+{\frac {\gamma\,\omega}{\eta+\omega}}&0&0\\ 
\noalign{\medskip}0&-{\frac {\lambda\,\omega}{\eta}}
&{\rho}-{\frac {m_{{2}}\phi\,\alpha\,K}{a+K}}+d+\delta
+{\frac {\gamma\,\omega}{\eta+\omega}}&0\\ 
\noalign{\medskip}0&-\sigma&-\sigma&\rho+\eta
\end{array}
\right|=0,
\]
which gives
\[
\left( \rho+r \right)  \left( \rho-m_{{1}}\alpha\,K
+{\frac {\lambda\,\omega}{\eta}}+d+{\frac {\gamma\,\omega}{\eta+\omega}}\right) 
\left( \rho-{\frac {m_{{2}}\phi\,\alpha\,K}{a+K}}+d+\delta
+{\frac {\gamma\,\omega}{\eta+\omega}} \right)  \left( \rho+\eta \right) = 0. 
\]
Thus, the eigenvalues are $\lambda_{1}=-r$, 
$\lambda_{2}=m_{{1}}\alpha K-{\frac {\lambda\,\omega}{\eta}}
-d-{\frac {\gamma\,\omega}{\eta+\omega}}$,
$\lambda_{3}
={\frac {m_{{2}}\phi\,\alpha\,K}{a+K}}-d-\delta-{\frac {
\gamma\,\omega}{\eta+\omega}}$, and $\lambda_{4}=-\eta$.
We have that $(E_1)$ is locally asymptotically stable if all 
the four eigenvalues $\lambda_{1}$, $\lambda_{2}$, $\lambda_{3}$ 
and $\lambda_{4}$ are negative. It is clearly seen that 
$\lambda_{1}=-r<0$, $\lambda_{4}=-\eta<0$, since \(r>0\) and \(\eta>0\). 
So, for the stability existence of \(E_{1}\), we should have 
\(\lambda_{2}<0\) and \(\lambda_{3}<0\), that is,
\begin{equation*}
\begin{aligned}
m_{{1}}&\alpha\,K-{\frac {\lambda\,\omega}{\eta}}
-d-{\frac {\gamma\,\omega}{\eta+\omega}}<0\quad 
\mbox{and}\quad 
{\frac {m_{{2}}\phi\,\alpha\,K}{a+K}}-d-\delta-{\frac {\gamma\,\omega}{\eta+\omega}}<0\\
&\Rightarrow m_{{1}}\alpha\,K<{\frac {\lambda\,\omega}{\eta}}+d+{\frac {\gamma\,\omega}{\eta+\omega}}
\quad\mbox{and}\quad 
{\frac {m_{{2}}\phi\,\alpha\,K}{a+K}}<d+\delta+{\frac {\gamma\,\omega}{\eta+\omega}}\\
&\Rightarrow m_{{1}}\alpha\,K<{\frac{\lambda\,\omega\left(\eta+\omega\right)
+\eta\,\gamma\,\omega+d\eta\left(\eta+\omega\right)}{\eta\,\left(\eta+\omega\right)}}
\quad \mbox{and}\quad 
{\frac {m_{{2}}\phi\,\alpha\,K}{a+K}}<{\frac {\left(d+\delta\right)
\,\left(\eta+\omega\right)+\gamma\,\omega}{\eta+\omega}}\\
&\Rightarrow m_{{1}}\alpha\,K\,\eta\,\left(\eta+\omega\right)
< \lambda\,\omega\left(\eta+\omega\right)+\eta\,\gamma\,\omega
+d\eta\left(\eta+\omega\right)
\quad \mbox{and} \quad  
m_{{2}}\phi\,\alpha\,K\,(\eta+\omega)< (a+K)
\,\left(d+\delta\right)\,\left(\eta+\omega\right)+\left(a+K\right)\gamma\,\omega\\
&\Rightarrow {\frac{m_{{1}}\alpha\,K\,\eta\,\left(\eta+\omega\right)}{\lambda\,\omega\left(\eta
+\omega\right)+\eta\,\gamma\,\omega+d\eta\left(\eta+\omega\right)}}<1 
\quad \mbox{and}\quad
{\frac{m_{{2}}\phi\,\alpha\,K\,(\eta+\omega)}{(a+K)\,\left(d+\delta\right)\,
\left(\eta+\omega\right)+\left(a+K\right)\gamma\,\omega}}<1
\end{aligned}
\end{equation*}
or, equivalently, $R_{0}<1$ and $R_{1}<1$ with
the critical parameters $R_{0}$ and $R_{1}$ given by
\eqref{eq:R0:R1}. 
\end{proof}

The conditions for stability of the pest free equilibrium point 
$E_1$ indicate that if the attack rate of the pest population \(\alpha\) 
is low, then the system may stabilize to the pest free steady state.

\begin{theorem}[stability of the healthy pest free equilibrium]
The healthy pest free equilibrium \(E_3=\left(\bar{X},0,\bar{I},\bar{A}\right)\) 
is locally asymptotically stable if, and only if,  
\begin{enumerate}
\item[(i)] $\bar{X}< \displaystyle {\frac{\lambda\,\bar{A}^{2}
+\left(\lambda+d+\gamma\right)\bar{A}+d}{m_{1}\,\alpha\,(1+\bar{A})}}$,

\item[(ii)] $C_{i}>0$, \(i=1,3\), 

\item[(iii)] \(C_{1}\,C_{2}-C_{3}>0\), 
\end{enumerate}
where
\begin{equation}
\label{eq:Cis}
\begin{split}
C_1&=-F_{{11}}-F_{{33}}+\eta,\\
C_2&=\left( F_{{11}}-\eta \right) F_{{33}}-F_{{11}}\eta
+{\frac { \left( \gamma\,\sigma\, \left( {X}^{3}+3\,\bar{X}^{2}a
+3\,a \right) +m_{{2}}{\phi}^{2}{\alpha}^{2} \left( 1
+\bar{A} \right) ^{2} \right) \bar{I}}{ \left( \bar{X}^{3}+a\right)  
\left( 1+\bar{A} \right) ^{2}}},\\
C_3&=\eta\,F_{{11}}F_{{33}}+{\frac {\bar{I}\,\bar{X}a{\alpha}^{2}\eta\,{\phi}^{2}m_{{2}}
\left( 1+\bar{A} \right) ^{2}-\sigma\,I\gamma\, \left( a
+\bar{X} \right) ^{3}F_{{11}}}{ \left( \bar{X}^{3}+a \right) \left( 1+\bar{A} \right)^{2}}},
\end{split}
\end{equation}
with
\begin{equation}
\label{eq:Fii}
\begin{split}
F_{{11}}&=r \left( 1-{\frac {2\,\bar{X}}{K}} \right) 
-{\frac {\phi\,\alpha\,a\,\bar{I}}{\left( a+\bar{X} \right) ^{2}}},\\
F_{{22}}&=m_{{1}}\alpha\,\bar{X}-\lambda\,\bar{A}-d-{\frac {\gamma\,\bar{A}}{1+\bar{A}}},\\
F_{{33}}&={\frac {m_{{2}}\phi\,\alpha\,\bar{X}}{a+\bar{X}}}
-d-\delta-{\frac {\gamma\,\bar{A}}{1+\bar{A}}}.
\end{split}
\end{equation}
\end{theorem}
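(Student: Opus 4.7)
The plan is to linearize system \eqref{eq1} at $E_3=(\bar{X},0,\bar{I},\bar{A})$ via the Jacobian \eqref{eq:JM:g}, exploit the fact that $S=0$ at this equilibrium to peel off one eigenvalue by inspection, and then apply the Routh--Hurwitz criterion to the remaining cubic factor. Substituting $S=0$ into \eqref{eq:JM:g}, every $S$-factor in the second row vanishes and that row reduces to $(0,F_{22},0,0)$ with $F_{22}$ as in \eqref{eq:Fii}. Expanding $\det(\rho I-J(E_3))$ along this row yields the factorization
\[
\det(\rho I-J(E_3))=(\rho-F_{22})\,Q(\rho),
\]
where $Q(\rho)$ is the characteristic polynomial of the $3\times 3$ submatrix obtained by striking out the $S$-row and $S$-column. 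Thus $F_{22}$ is one eigenvalue, and local asymptotic stability of $E_3$ is equivalent to $F_{22}<0$ together with all roots of $Q$ having negative real part.

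First I would verify that condition (i) encodes precisely $F_{22}<0$. Clearing the denominator in $F_{22}=m_{1}\alpha\bar{X}-\lambda\bar{A}-d-\gamma\bar{A}/(1+\bar{A})$, the inequality $F_{22}<0$ is equivalent to
\[
m_{1}\alpha\bar{X}(1+\bar{A})<\lambda\bar{A}^{2}+(\lambda+d+\gamma)\bar{A}+d,
\]
which is exactly (i). This disposes of the eigenvalue associated with the $S$-direction and reduces the problem to analyzing the three roots of $Q$.

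Next I would compute $Q(\rho)=\rho^{3}+C_{1}\rho^{2}+C_{2}\rho+C_{3}$ by reading the coefficients off the trace, the sum of the principal $2\times 2$ minors, and the determinant of the $3\times 3$ block. The trace immediately gives $C_{1}=-F_{11}-F_{33}+\eta$. The off-diagonal entries $-\phi\alpha\bar{X}/(a+\bar{X})$, $m_{2}\phi\alpha a\bar{I}/(a+\bar{X})^{2}$, $-\gamma\bar{I}/(1+\bar{A})^{2}$ and $\sigma$ then generate the additional $\bar{I}$-dependent contributions appearing in $C_{2}$ and $C_{3}$ in \eqref{eq:Cis}. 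Once $Q$ is in this explicit form, the classical Routh--Hurwitz criterion for a cubic with real coefficients asserts that all three roots have negative real part if and only if $C_{1}>0$, $C_{3}>0$, and $C_{1}C_{2}-C_{3}>0$, which are precisely (ii) and (iii). Combining this with $F_{22}<0$ from the previous paragraph gives both directions of the ``if and only if''.

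The main obstacle will be the algebraic bookkeeping required to match the $3\times 3$ minor expansion against the specific rational-function presentation of $C_{2}$ and $C_{3}$ in \eqref{eq:Cis}: the trace and determinant pieces must be placed over a common denominator involving $(a+\bar{X})^{3}$ and $(1+\bar{A})^{2}$, and the equilibrium identity $\phi\alpha\bar{I}=r(a+\bar{X})(K-\bar{X})/K$ derived in the construction of $E_{3}$ may be needed to produce the displayed form. The underlying argument, however, is the standard Jacobian plus Routh--Hurwitz procedure.
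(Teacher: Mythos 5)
Your proposal is correct and follows essentially the same route as the paper: evaluate the Jacobian \eqref{eq:JM:g} at $E_3$ where $S=0$, factor the characteristic polynomial as $(\rho-F_{22})(\rho^{3}+C_{1}\rho^{2}+C_{2}\rho+C_{3})$, identify condition (i) with $F_{22}<0$, and apply the Routh--Hurwitz criterion to the cubic factor to obtain (ii) and (iii).
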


\begin{proof}
At the healthy pest free fixed point 
\(E_3 = \left(\bar{X},0,\bar{I},\bar{A}\right)\), 
the Jacobian matrix is given by
\[
J(E_3)=
\left[ 
\begin{array}{cccc} 
F_{{11}}&-\alpha\,\bar{X}&-{\frac {\phi\,\alpha
\,\bar{X}}{a+\bar{X}}}&0\\ \noalign{\medskip}0&F_{{22}}&0&0\\ 
\noalign{\medskip}{\frac {m_{{2}}\phi\,\alpha\,a\,\bar{I}}{ 
\left( a+\bar{X} \right) ^{2}}}&\lambda\,\bar{A}&
F_{{33}}&-{\frac {\gamma\,\bar{I}}{ \left( 1+\bar{A} \right) ^{2}}}\\ 
\noalign{\medskip}0&\sigma&\sigma&-\eta
\end{array} 
\right]
\]
with $F_{ii}$, $i = 1, 2, 3$, as in \eqref{eq:Fii}.
The characteristic equation in $\rho$ is then given by
\begin{equation}
\label{charac}
\left( \rho-F_{{22}} \right)\left[{\rho}^{3}+C_{{1}}\,{\rho}^{2}+ C_2 \rho+C_3\right]=0,
\end{equation}
where the $C_{{i}}$, $i=1, 2, 3$, are defined by \eqref{eq:Cis}.
The equilibrium \(E_{3}\) is locally asymptotically stable 
if and only if all roots of the
polynomial \eqref{charac} have negative real parts. 
Equation \eqref{charac} has one root \(\rho=F_{22}\) 
and the other three roots are solution of
\begin{equation}
\label{eq:cub:pl3}
{\rho}^{3}+C_{{1}}\,{\rho}^{2}+ C_2 \rho+C_3=0. 
\end{equation}
To conclude about the stability behavior of \(E_{3}\), 
we  analyze the (three) roots of the cubic polynomial \eqref{eq:cub:pl3}.
The Routh--Hurwitz criteria applied to the third degree polynomial \eqref{eq:cub:pl3} 
tell us that a necessary and sufficient condition for the local stability of the system 
is that all eigenvalues must have negative real part, that is,  
$C_1>0$, $C_2>0$, $C_{3}>0$, and $C_{1}\,C_{2}-C_{3}>0$ must hold.
Hence, $E_3$ is locally asymptotically stable if, and only if, 
the following conditions hold:
\begin{itemize}
\item[(i)] \(F_{22}=m_{1}\,\alpha\,\bar{X}-\lambda\,\bar{A}
-d-\frac{\gamma A}{1+A}<0\Rightarrow\bar{X}<{\frac{\lambda\,\bar{A}^{2}
+\left(\lambda+d+\gamma\right)\bar{A}+d}{m_{1}\,\alpha\,(1+\bar{A})}}\);
\item[(ii)] \( C_{1}>0,C_{3}>0\);
\item[(iii)] \(C_{1}\,C_{2}-C_{3}>0\).
\end{itemize}
The proof is complete.
\end{proof}

\begin{theorem}[stability of the interior equilibrium point]
System \eqref{eq1} at the interior equilibrium point  
\(E^{*}=(X^{*},S^{*},I^{*},A^{*})\) 
is locally asymptotically stable if, and only if,
\begin{equation}
\label{cond:stab:E*}
\begin{gathered}
y_4 > 0,\\
y_{1}\,y_{2}-y_{3} > 0,\\
y_1\,y_2\,y_3-{y_3}^{{2}}-{y_{{1}}}^{{2}}\,y_{4} > 0,
\end{gathered}
\end{equation}
where
\begin{equation}
\begin{aligned}
\label{eq:def:yis}
y_1&=-\left(F_{{11}}+F_{{22}}+F_{{33}}\right)+\eta,\\
y_2&=\left( F_{{22}}+F_{{33}}-\eta \right) F_{{11}}+ \left( F_{{33}}-\eta
\right) F_{{22}}-\eta\,F_{{33}}+S^{*}X^{*}{\alpha}^{2}m_{{1}}
-{\frac {\gamma
\, \left( I^{*}+S^{*} \right) \sigma}{ \left( 1+A^{*} \right) ^{2} \left( X^{*}+a
\right)}}-{\frac {m_{{2}}{\phi}^{2}{\alpha}^{2}aI^{*}X^{*}}{ \left( X^{*}+a
\right)^{3}}},\\
y_3&=\left(  \left( -F_{{33}}+\eta \right) F_{{22}}+\eta\,F_{{33}}+{\frac 
{\gamma\, \left( I^{*}+S^{*} \right) \sigma}{ \left( 1+A^{*} \right) ^{2}}}
\right) F_{{11}}
+ \left( \eta\,F_{{33}}+ \left( -\lambda\,S^{*}+{\frac {\gamma\,I^{*}}{ \left( 1
+A^{*} \right) ^{2}}} \right) \sigma+{\frac {m_{{2}}{\phi}^{2}{\alpha}^{2}aI^{*}X^{*}}{ 
\left( X^{*}+a \right) ^{3}}} \right) F_{{22}}\\
&+\left( -S^{*}X^{*}{\alpha}^{2}m_{{1}}+\sigma\, \left( \lambda\,S^{*}+{\frac {
\gamma\,S^{*}}{ \left( 1+A^{*} \right) ^{2}}} \right)  \right) F_{{33}}
+S{\alpha}^{2} \left( \eta+{\frac {A^{*}\lambda\,\phi}{X^{*}+a}} \right) X^{*}m_{{1}}
-A^{*} \left( \lambda\,S^{*}+{\frac {\gamma\,S^{*}}{ \left( 1+A^{*} \right) ^{2}}}
\right) \sigma\,\lambda-{\frac {m_{{2}}{\phi}^{2}{\alpha}^{2}aI^{*}X^{*}\eta}{ 
\left( X^{*}+a \right) ^{3}}},\\
y_4&= \left(  \left( \lambda\,S^{*}-{\frac {\gamma\,I^{*}}{ \left( 1+A^{*}
\right) ^{2}}} \right) \sigma-\eta\,F_{{33}} \right) F_{{22}}-\sigma
\, \left( \lambda\,S^{*}+{\frac {\gamma\,S^{*}}{ \left( 1+A^{*} \right) ^{2}}}
\right) F_{{33}}+A^{*} \left(\lambda\,S^{*}+{\frac {\gamma\,
S^{*}}{ \left( 1+A^{*}\right) ^{2}}} \right) \sigma\,\lambda  F_{{11}}\\
&+{\frac {{\alpha}^{2}{\phi}^{2}\eta\,X^{*}F_{{22}}m_{{2}}aI}{ \left( X^{*}+a
\right) ^{3}}}-S^{*}X^{*}{\alpha}^{2}\eta\,m_{{1}}F_{{33}}-{\frac { \left( X^{*}+
a-\phi \right) m_{{2}}I^{*}\sigma\,a{\alpha}^{2} \left( \lambda\, \left( 1
+A^{*} \right) ^{2}+\gamma \right) \phi\,X^{*}S}{ \left( 1+A^{*} \right) ^{2}
\left( X^{*}+a \right) ^{3}}}\\
&+{\frac { \left(  \left( S^{*} \left( X^{*}+a-\phi \right) \sigma+A^{*}\eta\,\phi
\right)  \left( 1+A^{*} \right) ^{2}\lambda-I^{*}\gamma\,\sigma\, \left( X^{*}+a-
\phi \right)  \right) S^{*}X^{*}{\alpha}^{2}m_{{1}}}{ \left( 1+A^{*} \right) ^{2}
\left( X^{*}+a \right)}},	
\end{aligned}
\end{equation}
with
\begin{equation}
\label{eq:F11:F22:F33}
\begin{aligned}
F_{{11}}&=r \left( 1-{\frac {2\,{X}^{*}}{K}} \right) -\alpha\,\bar{S}-{\frac {\phi\,\alpha
\,a\,{I}^{*}}{ \left( a+{X}^{*} \right) ^{2}}},\\
F_{{22}}&=m_{{1}}\alpha\,{X}^{*}-\lambda\,{A}^{*}-d-{\frac {\gamma\,{A}^{*}}{1+{A}^{*}}},\\
F_{{33}}&={\frac {m_{{2}}\phi\,\alpha\,{X}^{*}}{a+{X}^{*}}}
-d-\delta-{\frac {\gamma\,{A}^{*}}{1+{A}^{*}}}.
\end{aligned}
\end{equation}
\end{theorem}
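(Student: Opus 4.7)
The plan is to evaluate the Jacobian matrix \eqref{eq:JM:g} at $E^{*}=(X^{*},S^{*},I^{*},A^{*})$, read off its characteristic polynomial as a quartic in~$\rho$, and then invoke the Routh--Hurwitz criterion for degree four. First I would substitute the coordinates of $E^{*}$ into \eqref{eq:JM:g}. The three diagonal blocks of the crop, susceptible--pest, and infected--pest rows collapse to $F_{11}$, $F_{22}$, $F_{33}$ as defined in \eqref{eq:F11:F22:F33} (with the fourth diagonal entry equal to $-\eta$), while the off--diagonal entries stay in their generic form because $S^{*}, I^{*}, A^{*}>0$ at the interior equilibrium.

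Next I would expand $\det(\rho I-J(E^{*}))=0$ to obtain a quartic
\[
\rho^{4}+y_{1}\rho^{3}+y_{2}\rho^{2}+y_{3}\rho+y_{4}=0,
\]
and identify its coefficients with \eqref{eq:def:yis}. Conceptually this is just $y_{1}=-\operatorname{tr}J(E^{*})$, $y_{2}=\sum\text{(principal }2\times2\text{ minors)}$, $y_{3}=-\sum\text{(principal }3\times3\text{ minors)}$, and $y_{4}=\det J(E^{*})$, and each can be matched term by term with the expressions given for $y_{1},\dots,y_{4}$ in the statement. Several simplifications of the form $r(1-X^{*}/K)-\alpha S^{*}-\phi\alpha I^{*}/(a+X^{*})=0$ and $m_{1}\alpha X^{*}-\lambda A^{*}-d-\gamma A^{*}/(1+A^{*})=0$, obtained from the equilibrium conditions \eqref{eq3} at $E^{*}$, are what drives the compact form actually written in \eqref{eq:def:yis}.

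Once the characteristic polynomial is matched, I would apply the Routh--Hurwitz criterion for a quartic: every root of $\rho^{4}+y_{1}\rho^{3}+y_{2}\rho^{2}+y_{3}\rho+y_{4}$ has negative real part if and only if $y_{1}>0$, $y_{3}>0$, $y_{4}>0$, $y_{1}y_{2}-y_{3}>0$, and $y_{3}(y_{1}y_{2}-y_{3})-y_{1}^{2}y_{4}>0$. Rewriting the last inequality as $y_{1}y_{2}y_{3}-y_{3}^{2}-y_{1}^{2}y_{4}>0$ gives exactly the three conditions in \eqref{cond:stab:E*}, from which local asymptotic stability of $E^{*}$ follows.

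The main obstacle is purely algebraic bookkeeping in the second step: the $4\times 4$ Jacobian at $E^{*}$ has no convenient block triangular structure (unlike the Jacobians at $E_{0}$, $E_{1}$, $E_{3}$, whose fourth--degree characteristic polynomials factored cleanly), so all $3\times 3$ and $4\times 4$ minors contribute and the coefficients $y_{2}, y_{3}, y_{4}$ become long rational expressions. The key is to exploit the equilibrium identities mentioned above to cancel the terms that would otherwise spoil the compact form of \eqref{eq:def:yis}; the remaining Routh--Hurwitz step is then routine.
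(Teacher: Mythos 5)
Your proposal follows essentially the same route as the paper: evaluate the Jacobian \eqref{eq:JM:g} at $E^{*}$, expand $\det(\rho I-J(E^{*}))$ into the quartic \eqref{char} with coefficients $y_{1},\dots,y_{4}$ as in \eqref{eq:def:yis}, and then apply the Routh--Hurwitz criterion (the paper, like you, notes $y_{1}>0$ and reduces the criterion to the three conditions \eqref{cond:stab:E*}). The argument is correct and no genuinely different idea is involved.
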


\begin{proof}
The Jacobian matrix at the coexistence equilibrium point $E^*$ is given by
\[
J(E^{*})
=\left[ 
\begin{array}{cccc} 
F_{{11}}&-\alpha\,X^{*}&-{\frac {\phi\,\alpha
\,X^{*}}{a+X^{*}}}&0\\ \noalign{\medskip}m_{{1}}\alpha\,S^{*}&F_{{22}}&0&
-\lambda\,S^{*}-{\frac {\gamma\,S^{*}}{ \left( 1+A^{*} \right) ^{2}}}\\ 
\noalign{\medskip}
{\frac {m_{{2}}\phi\,\alpha\,a\,I^{*}}{ \left( a+X^{*} \right) ^{2}}}&\lambda\,A^{*}
&F_{{33}}&\lambda\,S^{*}-{\frac {\gamma {I}^{*}}{ \left( 1+A^{*} \right) ^{2}}}\\ 
\noalign{\medskip}0&\sigma&\sigma&-\eta\end {array} \right], 
\]
where $F_{{ii}}$, $i = 1, 2, 3$, are given by \eqref{eq:F11:F22:F33}.
The characteristic equation in $\rho$ for the Jacobian matrix $J(E^*)$  is given by
\[
|\rho I-J(E^{*})|=
\left|
\begin {array}{cccc} \rho-F_{{11}}&\alpha\,X^{*}&{\frac {\phi\,\alpha\,X^{*}}{a+X^{*}}}&0\\ 
\noalign{\medskip}-m_{{1}}\alpha\,S^{*}&\rho-F_{{22}}&0&\lambda
\,S^{*}+{\frac {\gamma\,S^{*}}{ \left( 1+A^{*} \right) ^{2}}}\\ \noalign{\medskip}
-{\frac {m_{{2}}\phi\,\alpha\,a\,I^{*}}{ \left( a+X^{*} \right) ^{2}}}&-\lambda\,A^{*}
&\rho-F_{{33}}&-\lambda\,S^{*}+{\frac {\gamma\,I^{*}}{ \left( 1+A^{*} \right) ^{2}}}\\ 
\noalign{\medskip}0&-\sigma&-\sigma&\rho+\eta\end {array}
\right|=0,
\]
which gives
\begin{equation}
\label{char}
\rho^{{4}}+y_{1}\rho^{3}+y_{2}\rho^2+y_{3}\rho+y_{4}=0
\end{equation}
with $y_i$, $i = 1, \ldots, 4$, defined by \eqref{eq:def:yis}.
Then, noting that \(y_{1}>0\), it follows from the Routh--Hurwitz criterion 
that the interior equilibrium \(E^{*}\) is locally asymptotically stable if 
\eqref{cond:stab:E*} hold and unstable otherwise.
\end{proof}

Next, we shall find out conditions for which the system
enters into Hopf bifurcation around the interior equilibrium \(E^{*}\). 
We focus on the pest consumption rate $\alpha$, which is considered 
as the most biologically significant parameter. 


\section{Hopf-bifurcation}
\label{sec6}

Let us define the continuously differentiable function 
$\Psi:(0, \infty)\rightarrow \mathbb{R}$ of $\alpha$ as follows:
\begin{equation}
\label{eq:Psi}
\Psi(\alpha):=y_{1}(\alpha)\,y_{2}(\alpha)\,y_{3}(\alpha)
-{y_3}^{2}(\alpha)-{y_4}(\alpha){y_1}^{2}(\alpha),
\end{equation}
where we look to expressions \eqref{eq:def:yis} as functions of $\alpha$.
The conditions for occurrence of a Hopf-bifurcation tell us that the spectrum 
\(\sigma(\alpha)\) of the characteristic equation
should satisfy the following conditions:
\begin{itemize}
\item[(i)] there exists \(\alpha^{*}\in (0,\infty)\) at which a pair 
of complex eigenvalues \(\rho(\alpha^{*}),
{\bar{\rho}}(\alpha^{*})\in \sigma(\alpha)\) are such that
\[
\Re e{\rho}(\alpha^{*})=0,
\quad Im\rho(\alpha^{*})=\omega_{0}>0
\]
with transversality condition
\[
\left.\frac{d\Re e[\rho(\alpha)]}{d\alpha}\right|_{\alpha^{*}}\neq 0;
\]
\item[(ii)] all other elements of \(\sigma(\alpha)\) have negative real parts. 
\end{itemize}

We obtain the following result.

\begin{theorem}[Hopf bifurcation around the interior equilibrium 
with respect to the pest consumption rate $\alpha$]
\label{thm4.5}
Let $\Psi(\alpha)$ be given as in \eqref{eq:Psi}
and let \(\alpha^{*}\in (0, \infty)\) be such that 
$\Psi(\alpha^{*})=0$.
System \eqref{eq1} enters into a Hopf bifurcation 
around the coexistence equilibrium $E^{*}$  
at \(\alpha^{*}\) if and only if
\(A(\alpha^*)C(\alpha^*)+B(\alpha^*)D(\alpha^*)\neq0\),
where
\begin{equation}
\label{eq:ABCD}
\begin{split}
A(\alpha)&=4{\beta_{1}}^{3}-12\beta_{1}{\beta_{2}}^{2}
+3y_1\left({\beta_{1}}^{2}-{\beta_{2}}^{2}\right)+2y_2\beta_{1}+y_3,\\
B(\alpha)&=12{\beta_{1}}^{2}\,\beta_{2}+6y_1\beta_{1}\beta_{2}-4\,{\beta_{2}}^3+2y_{2}\beta_{2},\\
C(\alpha)&=\left( {\beta_{{1}}}^{3}-3\,\beta_{{1}}{\beta_{{2}}}^{2} \right) y_{{1}}'
+ \left( {\beta_{{1}}}^{2}-{\beta_{{2}}}^{2} \right) y_{{2}}'
+\beta_{{1}}y_{{3}}'+y_{{4}}',\\
D(\alpha)&=\left( 3\,{\beta_{{1}}}^{2}\beta_{{2}}-{\beta_{{2}}}^{3} \right) y_{{1}}'
+2\,\beta_{{1}}\beta_{{2}}y_{{2}}'+\beta_{{2}}y_{{3}}',
\end{split}
\end{equation}
with $\rho_1=\beta_{1}+i\beta_{2}$
and $\rho_2=\beta_{1}-i\beta_{2}$
the pair of conjugate complex eigenvalues,
solutions of the characteristic equation \eqref{char}, 
with \(\rho_i(\alpha)\) purely imaginary at  \(\alpha=\alpha^{*}\), $i = 1, 2$,
and where the other eigenvalues $\rho_3$ and $\rho_4$, 
solutions of \eqref{char}, have negative real parts.
\end{theorem}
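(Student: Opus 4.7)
The plan is to verify the two standard requirements for a Hopf bifurcation listed just above the statement: (i) existence of a purely imaginary conjugate pair of eigenvalues at $\alpha=\alpha^{*}$, together with a nonzero transversality derivative, and (ii) negative real parts for the remaining eigenvalues. Condition (ii) is already packaged into the hypotheses, so the substantive work is (i). For the purely imaginary pair, I would substitute $\rho=i\omega_{0}$ (with $\omega_{0}>0$) into the characteristic equation \eqref{char} and separate real and imaginary parts to obtain
\[
\omega_{0}^{4}-y_{2}\,\omega_{0}^{2}+y_{4}=0,\qquad -y_{1}\omega_{0}^{3}+y_{3}\omega_{0}=0.
\]
The second equation gives $\omega_{0}^{2}=y_{3}/y_{1}$ (note $y_{1}>0$), and inserting this into the first yields precisely $y_{1}y_{2}y_{3}-y_{3}^{2}-y_{4}y_{1}^{2}=0$, which is the hypothesis $\Psi(\alpha^{*})=0$. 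Thus $\Psi(\alpha^{*})=0$ is exactly what is needed to guarantee the imaginary pair at $\alpha^{*}$.

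For the transversality part, I would regard the eigenvalue as a smooth branch $\rho(\alpha)=\beta_{1}(\alpha)+i\beta_{2}(\alpha)$ of \eqref{char} satisfying $\beta_{1}(\alpha^{*})=0$, $\beta_{2}(\alpha^{*})=\omega_{0}$. Substituting into \eqref{char} and separating real and imaginary parts produces two smooth scalar equations $R(\beta_{1},\beta_{2},\alpha)=0$ and $\tilde I(\beta_{1},\beta_{2},\alpha)=0$ (the latter obtained after dividing through by $\beta_{2}\neq 0$). A direct computation of the partial derivatives then identifies
\[
\frac{\partial R}{\partial \beta_{1}}=A,\quad \frac{\partial R}{\partial \beta_{2}}=-B,\quad \frac{\partial \tilde I}{\partial \beta_{1}}=B,\quad \frac{\partial \tilde I}{\partial \beta_{2}}=A,\quad \frac{\partial R}{\partial \alpha}=C,\quad \frac{\partial \tilde I}{\partial \alpha}=D,
\]
with $A,B,C,D$ as in \eqref{eq:ABCD}. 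Differentiating the two defining equations in $\alpha$ and applying Cramer's rule to the $2\times2$ linear system for $(\beta_{1}',\beta_{2}')$ gives
\[
\beta_{1}'(\alpha)=-\frac{A(\alpha)C(\alpha)+B(\alpha)D(\alpha)}{A(\alpha)^{2}+B(\alpha)^{2}}.
\]
Since $\dfrac{d\Re e\rho}{d\alpha}\bigr|_{\alpha^{*}}=\beta_{1}'(\alpha^{*})$, the transversality condition $\beta_{1}'(\alpha^{*})\neq 0$ is equivalent to $A(\alpha^{*})C(\alpha^{*})+B(\alpha^{*})D(\alpha^{*})\neq 0$, which is the claim.

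The main obstacle is purely computational: carefully matching the expressions \eqref{eq:ABCD} for $A,B,C,D$ against the six partial derivatives above. This is bookkeeping on the expansions of $(\beta_{1}+i\beta_{2})^{k}$ for $k=2,3,4$, but one must be meticulous about signs and about absorbing the factor $\beta_{2}$ from the imaginary part before differentiating. Two small side remarks worth recording in the write-up: one needs $\beta_{2}(\alpha^{*})=\omega_{0}>0$ to legitimately divide the imaginary equation by $\beta_{2}$, and one needs $A(\alpha^{*})^{2}+B(\alpha^{*})^{2}>0$ so that $\beta_{1}'$ is well defined; both follow from $\omega_{0}>0$ because $A(\alpha^{*})$ and $B(\alpha^{*})$ cannot vanish simultaneously when $\beta_{1}=0$ and $\beta_{2}=\omega_{0}>0$ (indeed $B(\alpha^{*})=-4\omega_{0}^{3}+2y_{2}\omega_{0}$ and $A(\alpha^{*})=-3y_{1}\omega_{0}^{2}+y_{3}$ cannot both vanish without forcing a higher-order degeneracy excluded by the simple-root assumption). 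With requirement (ii) supplied by hypothesis, the Hopf bifurcation theorem applies and the proof concludes.
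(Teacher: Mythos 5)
Your proposal is correct and takes essentially the same route as the paper: the paper likewise extracts $\omega_0^2=y_3/y_1$ from $\Psi(\alpha^*)=0$ (by factoring \eqref{char} as $\left(\rho^2+\frac{y_3}{y_1}\right)\left(\rho^2+y_1\rho+\frac{y_1 y_4}{y_3}\right)=0$ and reading off the imaginary pair and the signs of $\rho_3,\rho_4$), and it obtains $\beta_1'(\alpha^*)=-\frac{A(\alpha^*)C(\alpha^*)+B(\alpha^*)D(\alpha^*)}{A^2(\alpha^*)+B^2(\alpha^*)}$ by substituting $\rho=\beta_1+i\beta_2$ into \eqref{char}, differentiating in $\alpha$, and separating real and imaginary parts into exactly your $2\times 2$ system $A\beta_1'-B\beta_2'+C=0$, $B\beta_1'+A\beta_2'+D=0$. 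The only cosmetic deviations are your division of the imaginary-part equation by $\beta_2$ (which just rescales that row by $1/\beta_2$ on the zero set, so the identifications in \eqref{eq:ABCD} are unaffected) and your explicit observation that $A(\alpha^*)=-2y_3\neq0$ ensures $A^2+B^2>0$, a detail the paper leaves implicit.
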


\begin{proof}
The critical value \(\alpha^{*}\) is obtained from the equation \(\Psi(\alpha)=0\). 
For \(\alpha=\alpha^{*}\), we have 
\begin{equation*}
\begin{split}
\Psi(\alpha)=0
&\Rightarrow y_{1}(\alpha)\,y_{2}(\alpha)\,y_{3}(\alpha)
-{y_{3}}^{2}(\alpha)-y_{4}(\alpha)\,y_{1}^{2}(\alpha)=0\\
&\Rightarrow y_{1}(\alpha)\,y_{2}(\alpha)\,y_{3}(\alpha)
={y_{3}}^{2}(\alpha)+y_{4}(\alpha)\,y_{1}^{2}(\alpha)=0\\
&\Rightarrow y_{1}\,y_{2}\,y_{3}={y_{3}}^{2}+y_{4}\,{y_{1}}^{2},
\end{split} 
\end{equation*}
from which we get 
\[
y_{2}=\frac{y_{3}}{y_{1}}+\frac{y_{4}\,{y_{1}}}{y_{3}}.
\]
Then, the characteristic equation \eqref{char} becomes
\begin{equation*}
\begin{split}
\rho^{4}+y_{1}\,\rho^{3}+\left(\frac{y_{3}}{y_{1}}
+\frac{y_{1}\,y_{4}}{y_{3}}\right)\rho^{2}+y_{3}\,\rho+y_{4}=0
&\Rightarrow \rho^{4}+y_{1}\rho^{3}+\frac{y_{1}\,y_{4}}{y_{3}}\,\rho^2
+\frac{y_{3}}{y_{1}}\,\rho^{2}+y_{3}\,\rho+y_{4}=0\\
&\Rightarrow \rho^{2}\left(\rho^{2}+y_{1}\,\rho+\frac{y_{1}\,y_{4}}{y_{3}}\right)
+\frac{y_{3}}{y_{1}}\left(\rho^2+y_{1}\rho+\frac{y_{1}\,y_{4}}{y_{3}}\right)=0,
\end{split} 
\end{equation*}
that is,
\begin{equation}
\label{hopf}
\left(\rho^{2}+\frac{y_{3}}{y_{1}}\right)\left(\rho^2+y_{1}\rho
+\frac{y_{1}\,y_{4}}{y_{3}}\right)=0.
\end{equation}
We suppose equation \eqref{hopf} has four roots $\rho_i$, $i=1,2,3,4$, 
with the pair of purely imaginary roots $\rho_1$ and $\rho_2$ 
at $\alpha=\alpha^*$: $\rho_1=\bar{\rho_2}$. We get
\begin{equation}
\label{sim}
\begin{split}
\rho_3+\rho_4&=-y_1,\\
\omega_0^2+\rho_3\rho_4&=y_2,\\
\omega_0^2(\rho_3+\rho_4)&=-y_3,\\
\omega_0^2\rho_3\rho_4&=y_4,
\end{split}
\end{equation}
where $\omega_0=Im\rho_{1}(\alpha^*)$. From these relations, 
we obtain that \({\omega_0}^{2}={\frac{y_3}{y_1}}\).
Now, if $\rho_3$ and $\rho_4$ are complex conjugate, then 
from the first part of \eqref{sim}, it follows that \(2\,\Re e\rho_3=-y_1\). 
If they are real roots, then by the first and last parts of \eqref{sim}, 
$\rho_3<0$ and $\rho_4<0$.  Further, as $\psi(\alpha^*)$ is a continuous 
function of all its roots, there exists an open interval 
\(\alpha\in\left(\alpha^{*}-\epsilon,\alpha^{*}+\epsilon\right)\)
such that $\rho_1$ and $\rho_2$ are complex conjugate for $\alpha$.
Suppose their general forms in this neighborhood are
\begin{equation*}
\begin{aligned}
\rho_1(\alpha)&=\beta_{1}(\alpha)+i\beta_{2}(\alpha),\\
\rho_2(\alpha)&=\beta_{1}(\alpha)-i\beta_{2}(\alpha).
\end{aligned}
\end{equation*}
Now, we verify the transversality condition
\begin{equation}
\label{transversality}
\left.{\frac{d\Re e[\rho_j(\alpha)]}{d\alpha}}\right|_{\alpha=\alpha^*}\neq 0,
\quad j=1,2.
\end{equation}
Substituting \(\rho_j(\alpha)=\beta_{1}(\alpha)\pm i\beta_{2}(\alpha)\)
into \eqref{char}, we get the following equation:
\[
\left(\beta_{1}(\alpha)+i\,\beta_{2}(\alpha)\right)^{4}
+y_{1}\left(\beta_{1}(\alpha)+i\,\beta_{2}(\alpha)\right)^{3}
+y_{2}\left(\beta_{1}(\alpha)+i\,\beta_{2}(\alpha)\right)^{2}
+y_{3}\left(\beta_{1}(\alpha)+i\,\beta_{2}(\alpha)\right)+y_{4}=0.
\]
Differentiating with respect to \(\alpha\), we have
\begin{multline*}
4\left(\beta_{1}(\alpha)+i\,\beta_{2}(\alpha)\right)^{3}\left({\beta_{1}}'(\alpha)
+i\,{\beta_{2}}'(\alpha)\right)+{y_{1}}'\left(\beta_{1}(\alpha)+i\,\beta_{2}(\alpha)\right)^{3}
+3y_{1}\left(\beta_{1}(\alpha)+i\,\beta_{2}(\alpha)\right)^{2}\left({\beta_{1}}'(\alpha)
+i\,{\beta_{2}}'(\alpha)\right)\\
+{y_{2}}'\left(\beta_{1}(\alpha)+i\beta_{2}(\alpha)\right)^{2}
+2y_{2}\left({\beta_{1}}(\alpha)+i\,{\beta_{2}}(\alpha)\right)\left({\beta_{1}}'(\alpha)
+i\,{\beta_{2}}'(\alpha)\right)
+{y_{3}}'\left({\beta_{1}}(\alpha)+i\,{\beta_{2}}(\alpha)\right)
+y_{3}\left({\beta_{1}}'(\alpha)+i\,{\beta_{2}}'(\alpha)\right)+{y_{4}}'=0,
\end{multline*}
that is,
\begin{align*}
\underbrace{4\,i{\beta_{{1}}}^{3}{\beta_{{2}}}'+12\,i{\beta_{{1}}}^{2}\beta_{{2}}\beta_{{1}}'
+3\,i{\beta_{{1}}}^{2}\beta_{{2}}y_{{1}}'+3\,i{\beta_{{1}}}^{2}\beta_{{2}}'y_{{1}}
-12\,i\beta_{{1}}{\beta_{{2}}}^{2}\beta_{{2}}'+6\,i\beta_{{1}}\beta_{{2}}\beta_{{1}}'y_{{1}}}\\
\underbrace{-4\,i{\beta_{{2}}}^{3}\beta_{{1}}'-i{\beta_{{2}}}^{3}y_{{1}}'
-3\,i{\beta_{{2}}}^{2}\beta_{{2}}'y_{{1}}+2\,i\beta_{{1}}\beta_{{2}}y_{{2}}'
+2\,i\beta_{{1}}\beta_{{2}}'y_{{2}}+2\,i\beta_{{2}}\beta_{{1}}'y_{{2}}}\\
\overbrace{+4\,{\beta_{{1}}}^{3}\beta_{{1}}'+{\beta_{{1}}}^{3}y_{{1}}'
-12\,{\beta_{{1}}}^{2}\beta_{{2}}\beta_{{2}}'
+3\,{\beta_{{1}}}^{2}\beta_{{1}}'y_{{1}}-12\,\beta_{{1}}{
\beta_{{2}}}^{2}\beta_{{1}}'-3\,\beta_{{1}}{\beta_{{2}}}^{2}y_{{1}}'
-6\,\beta_{{1}}\beta_{{2}}\beta_{{2}}'y_{{1}}+4\,{\beta_{{2}}}^{3}\beta_{{2}}'}\\
\overbrace{-3\,{\beta_{{2}}}^{2}\beta_{{1}}'y_{{1}}+i\beta_{{2}}y_{{3}}'
+i\beta_{{2}}'y_{{3}}+{\beta_{{1}}}^{2}y_{{2}}'+2\,\beta_{{1}}\beta_{{1}}'y_{{2}}
-{\beta_{{2}}}^{2}y_{{2}}'-2\,\beta_{{2}}\beta_{{2}}'y_{{2}}
+\beta_{{1}}y_{{3}}'+\beta_{{1}}'y_{{3}}+y_{{4}}'}=0.
\end{align*}
Comparing the real and imaginary parts, we get
\begin{multline}
\label{real}
\left( 4\,{\beta_{{1}}}^{3}-12\,\beta_{{1}}{\beta_{{2}}}^{2}+3\,y_{{1}}
\left( {\beta_{{1}}}^{2}-{\beta_{{2}}}^{2} \right) +2\,\beta_{{1}}y_{
{2}}+y_{{3}} \right) \beta_{{1}}'
+\left( -12\,{\beta_{{1}}}^{2}\beta_{{2}}-6\,\beta_{{1}}\beta_{{2}}y_{
{1}}+4\,{\beta_{{2}}}^{3}-2\,\beta_{{2}}y_{{2}} \right) \beta_{{2}}'\\
+\left( {\beta_{{1}}}^{3}-3\,\beta_{{1}}{\beta_{{2}}}^{2} \right) y_{{
1}}'+ \left( {\beta_{{1}}}^{2}-{\beta_{{2}}}^{2} \right) y_{{2}}'+
\beta_{{1}}y_{{3}}'+y_{{4}}'=0
\end{multline}
and
\begin{multline}
\label{imag}
\left(12\,{\beta_{{1}}}^{2}\beta_{{2}}+6\,\beta_{{1}}\beta_{{2}}y_{{1}}
-4\,{\beta_{{2}}}^{3}+2\,\beta_{{2}}y_{{2}} \right) \beta_{{1}}'
+\left( 4\,{\beta_{{1}}}^{3}-12\,\beta_{{1}}{\beta_{{2}}}^{2}+3\,y_{{1
}} \left( {\beta_{{1}}}^{2}-{\beta_{{2}}}^{2} \right) +2\,\beta_{{1}}y
_{{2}}+y_{{3}} \right) \beta_{{2}}'\\
+\left( 3\,{\beta_{{1}}}^{2}\beta_{{2}}-{\beta_{{2}}}^{3} \right) y_{{1}}'
+2\,\beta_{{1}}\beta_{{2}}y_{{2}}'+\beta_{{2}}y_{{3}}'=0.		
\end{multline}
Equivalently, we can write \eqref{real} and \eqref{imag}, in a compact form, as
\begin{equation}
\label{compact}
\begin{split}
A(\alpha)\beta_{1}'(\alpha)-B(\alpha)\beta_{2}'(\alpha)+C(\alpha)&=0,\\
B(\alpha)\beta_{1}'(\alpha)+A(\alpha)\beta_{2}'(\alpha)+D(\alpha)&=0,
\end{split}
\end{equation}
where $A(\alpha)$, $B(\alpha)$, $C(\alpha)$ and $D(\alpha)$ are defined 
by \eqref{eq:ABCD}.
Hence, solving system \eqref{compact} for $\beta_{1}'(\alpha^{*})$, we get
\[
\left.{\frac{d\Re e[\rho_{j}(\alpha)]}{d\alpha}}\right|_{\alpha=\alpha^{*}}
= \left.{\beta_{1}'(\alpha)}\right|_{\alpha=\alpha^{*}}
=-\frac{A(\alpha^*)C(\alpha^*)+B(\alpha^*)D(\alpha^*)}{A^2(\alpha^*)+B^2(\alpha^*)}\neq0.
\]
Thus, the transversality conditions hold 
if and only if \(A(\alpha^*)C(\alpha^*)+B(\alpha^*)D(\alpha^*)\neq0\),
in which case a Hopf bifurcation occurs at $\alpha=\alpha^*$.
\end{proof} 

We have restricted ourselves here to study the Hopf bifurcation around 
the interior equilibrium point with respect to the pest consumption rate 
$\alpha$, because it is the most biologically significant parameter. However,
by replacing \(\alpha\) by other model parameters, such as $\lambda$, $\gamma$
or $\sigma$, one can also study the Hopf bifurcation around the interior equilibrium 
point with respect to such parameters of the model.


\section{The optimal control problem}
\label{sec7}

In this section, we introduce an optimal control problem that consists
to minimize the negative effects of chemical pesticides and to minimize the cost of pest management.
We extend the model system \eqref{eq1} by incorporating three time-dependent controls: 
$u_1(t)$, \(u_{2}(t)\) and $u_3(t)$, where the first control $u_1$ is for controlling 
the use of chemical pesticides, the second control $u_2$ is for bio-pesticides, 
and the third control $u_3$ is for advertisement.
The objective is to reduce the price of announcement for farming awareness via radio, TV, 
telephony and other social media, while taking into account the price regarding the control measures. 
Our target is to find optimal functions ${u_1}^{*}(t)$, ${u_{2}}^{*}(t)$ and ${u_3}^{*}(t)$ 
using the PMP \cite{Fleming}. In agreement, our system \eqref{eq1} is modified 
into the induced nonlinear dynamic control system given by
\begin{equation}
\label{controlsystem}
\begin{cases}
\frac{dX}{dt} =r X\left(1 - \frac{X}{K}\right) - \alpha XS - \frac{\phi \alpha XI}{a+X} ,\\
\frac{dS}{dt} =m_{1}\alpha XS - u_2\lambda AS - dS- \frac{u_1\gamma SA}{1+A},\\
\frac{dI}{dt} =\frac{m_{2}\phi \alpha XI}{a+X} + u_2\lambda AS - (d+\delta)I- \frac{u_1\gamma IA}{1+A},\\
\frac{dA}{dt} =u_3\omega + \sigma(S+I) - \eta A,
\end{cases}
\end{equation}
with given initial conditions 
\begin{equation}
\label{initial}
X(0)=X_0, \quad S(0)=S_0, \quad I(0)=I_0 \quad \mbox{and}\quad A(0)=A_0.
\end{equation}
We need to reduce the number of pests and also the price of pest administration 
by reducing the cost of pesticides and exploiting the stage of awareness. 
The objective cost functional for the minimization problem is denoted 
by \(J(u_{1},u_{2},u_{3})\) and defined as follows:
\begin{equation}
\label{obj}
J(u_{1}(\cdot),u_{2}(\cdot),u_{3}(\cdot))
=\int_{0}^{t_{f}}g(t,\varPhi(t),u(t))dt
=\int^{t_f}_{0}\left[
\frac{P_1\,{u_{1}}^{2}(t)}{2}
+\frac{P_2\,{u_{2}}^{2}(t)}{2}
+\frac{P_{3}{u_{3}}^{2}(t)}{2}+Q\,S^{2}(t)-R\,{A^{2}(t)}
\right]dt,
\end{equation}
where \(\varPhi(t)=(X(t),S(t),I(t),A(t))\) is the solution 
to the induced control system \eqref{controlsystem}--\eqref{initial},
$t \in [0, t_f]$, for the specific control \(u(t) = (u_{1}(t),u_{2}(t),u_{3}(t))\); 
the amounts $\frac{P_1}{2}, \frac{P_2}{2}$ and $\frac{P_{3}}{2}$ 
are the positive weight constants on the benefit of the cost; and the terms $Q$ and $R$ 
are the penalty multipliers. We prefer a quadratic cost functional on the controls, 
as an approximation for the nonlinear function depending on the assumption that the cost 
takes a nonlinear form, and also to prevent the bang-bang or singular optimal control cases. 
The control set is defined on \([t_0, t_f]\) subject to the constraints 
$0 \leq u_{i}(t) \leq 1$, $i = 1, 2, 3$, where \(t_0 \) and \(t_f\) are the starting 
and final times of the optimal control problem, respectively. The aim is to find the optimal profile 
of \(u_{1}(t), u_{2}(t)\) and \(u_{3}(t)\), denoted by ${u_{i}}^{*}(t)$, $i = 1, 2, 3$, 
so that the cost functional \(J\) has a minimum value, that is,
\begin{equation}
\label{u}
J(u_1^*,u_2^*,u_3^*)
= \min(J\left(u_1,u_2,u_3\right)\colon\left(u_1,u_2,u_3\right)\in \mathcal{U})
\end{equation} 
subject to \eqref{controlsystem}--\eqref{initial}, where
\begin{equation}
\label{controlset}
\mathcal{U}=\left\{u=\left(u_{1},u_{2},u_{3}\right) \in L^1 \, | \, 
0 \leq u_{1}(t)\leq 1, \ 0\leq u_{2}(t)\leq 1,\ 
0 \leq u_{3}(t)\leq 1,\  t\in[0,t_{f}]\right\}
\end{equation}
is the admissible control set with $L^1$ the class of Lebesgue measurable functions.
The PMP \cite{Pontryagin} is used to find the optimal control triplet
\(u^{*}=(u_{1}^{*},u_{2}^{*},u_{3}^{*})\). For that the Hamiltonian function is defined as
\begin{equation}
\mathcal{H}=\left[\frac{P_{1}u_{1}^{2}}{2}
+\frac{P_{2}u_{2}^{2}}{2}\frac{+P_{3}u_{3}^{2}}{2}
+Q\,{S^{2}}-R\,A^{2}\right]+\sum_{i=1}^{4}
\lambda_{i}\, f_{i}(X,S,I,A),
\end{equation}
where $\lambda_{i}$, $i = 1, 2,\ldots, 4$, are the adjoint variables 
and $f_{i}$, $i=1,2,3,4$, are the right-hand sides of system 
\eqref{controlsystem} at the $i^{th}$ state. Before trying
to find the solution of the optimal control problem through
the PMP, one first needs to prove that the problem has a solution.


\subsection{Existence of solution}

The existence of an optimal control triple can be guaranteed by using well-known results 
\cite{Fleming}. Since all the state variables involved in the model are continuously
differentiable, existence of solution is guaranteed under the following conditions
\cite{Fleming,Pontryagin,Suzan}:
\begin{itemize}
\item[(i)] The set of trajectories to system \eqref{controlsystem}--\eqref{initial} 
on the admissible class of controls \eqref{controlset} is non-empty.

\item[(ii)] The set where the controls take values is convex and closed.

\item[(iii)] Each right hand side of the state system \eqref{controlsystem} 
is continuous, is bounded above by a sum of the bounded control and
the state, and can be written as a linear function of \(u\)
with coefficients depending on time and the state variables.

\item[(iv)] The integrand \(g(t,\varPhi,u)\) of the objective 
functional \eqref{obj} is convex with respect to the control variables.

\item[(v)] There exist positive numbers 
\(\ell_{1},\ell_{2},\ell_{3},\ell_{4}\) and a constant \(\ell>1\) such that
\[
g(t,\varPhi,u)\geq-\ell_{1}+\ell_{2}|u_{1}|^{\ell}+\ell_{3}|u_{2}|^{\ell}+\ell_{4}|u_{3}|^{\ell}.
\] 
\end{itemize}	

We obtain the following existence result.

\begin{theorem}
Consider the optimal control problem defined by: the
objective functional \eqref{obj} on \eqref{controlset};
the control system \eqref{controlsystem}; and nonnegative 
initial conditions \eqref{initial}. Then there exists an optimal 
control triple \(u^* = (u_{1}^*,u_{2}^*,u_{3}^{*})\) 
and corresponding state trajectory 
\(\varPhi^{*} = \left(X^{*},S^{*},I^{*},A^{*}\right)\) 
such that \(J(u_{1}^*,u_{2}^*,u_{3}^*)
=\minof{\underset{\mathcal{U}}{\min}}J\left(u_{1},u_{2},u_{3}\right)\)
subject to \eqref{controlsystem}--\eqref{initial}. 
\end{theorem}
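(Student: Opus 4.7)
The plan is to verify the five hypotheses (i)--(v) stated just before the theorem, which together guarantee existence of an optimal control via the classical Fleming--Rishel theorem cited in \cite{Fleming}. I would then invoke that theorem directly.

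First, for condition (i), I would note that the right-hand sides $f_i$ of the control system \eqref{controlsystem} are polynomial (or rational with non-vanishing denominator $a+X$, since $X\geq 0$) in the state variables and continuous in $t$ for any measurable control $u\in\mathcal{U}$. Hence, for any admissible control, a local Carath\'eodory solution exists; combined with the \emph{a priori} bounds already established in Theorem~\ref{bounded} (which extend to the controlled system because the controls only reduce certain terms or enter multiplicatively as factors in $[0,1]$), the solution is global on $[0,t_f]$, so the trajectory set is non-empty. Condition (ii) is immediate: $\mathcal{U}$ is the product $[0,1]^3$-valued set of measurable functions, which is convex and closed in $L^1$.

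Next, for condition (iii), I would exhibit the explicit affine-in-$u$ form of each $f_i$: the controls $u_1,u_2,u_3$ appear only linearly in \eqref{controlsystem}, so each right-hand side can be written as $f_i(t,\varPhi,u)=\alpha_i(t,\varPhi)+\sum_{j}\beta_{ij}(t,\varPhi)u_j$ with coefficients continuous in $\varPhi$. The sub-linear growth bound $|f_i|\leq c_i(1+|\varPhi|+|u|)$ follows from the boundedness of $\varPhi$ on $\mathcal{V}$ (Theorem~\ref{bounded}) and $|u|\leq\sqrt{3}$. For condition (iv), the integrand
\[
g(t,\varPhi,u)=\tfrac{P_1}{2}u_1^2+\tfrac{P_2}{2}u_2^2+\tfrac{P_3}{2}u_3^2+QS^2-RA^2
\]
is a sum of a strictly convex quadratic form in $u$ (with positive definite Hessian $\mathrm{diag}(P_1,P_2,P_3)$) and a term independent of $u$, so it is convex in $u$.

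Finally, for condition (v), I would take $\ell=2$ and $\ell_j=P_{j-1}/2$ for $j=2,3,4$. The only concern is the negative term $-RA^2$, but since Theorem~\ref{bounded} gives $0<A\leq(\omega d+\sigma L)/(\eta d)$ along any admissible trajectory, we have $-RA^2\geq -R\bigl((\omega d+\sigma L)/(\eta d)\bigr)^2=:-\ell_1$, and $QS^2\geq 0$, yielding
\[
g(t,\varPhi,u)\;\geq\;-\ell_1+\tfrac{P_1}{2}|u_1|^2+\tfrac{P_2}{2}|u_2|^2+\tfrac{P_3}{2}|u_3|^2,
\]
as required. With (i)--(v) in hand, the Fleming--Rishel existence theorem \cite{Fleming} gives an optimal triple $u^*\in\mathcal{U}$ with associated trajectory $\varPhi^*$ minimizing $J$. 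The main delicate point I anticipate is condition (v): one must be sure that the sign-indefinite integrand is bounded below along \emph{every} admissible trajectory, which is precisely where the a priori bound on $A$ from Theorem~\ref{bounded} is essential; without it the coercivity estimate could fail.
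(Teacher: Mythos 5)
Your proposal is correct and follows essentially the same route as the paper: verify the five Fleming--Rishel hypotheses (i)--(v) and invoke the existence theorem of \cite{Fleming}, using the boundedness result of Theorem~\ref{bounded} for the trajectory set and the coercivity bound. In fact your treatment of condition (v) is slightly more careful than the paper's (which takes $\ell_{1}=\sup\left(Q\,S^{2}-R\,A^{2}\right)$ and $\ell_{2}=P_{1}$, etc.), since you explicitly bound $-R\,A^{2}$ from below via the a priori bound on $A$ and keep the correct factors $P_{j}/2$.
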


\begin{proof}
The proof is done verifying each of the five items (i)--(v) stated above.
\begin{itemize}
\item[(i)] Since \(\mathcal{U}\) is a nonempty set of real valued measurable functions 
on the finite time interval \(0\leq t \leq t_{f}\), the system \eqref{controlsystem} 
has bounded coefficients and hence any solutions are bounded on \([0,t_{f}]\)
(see Theorem~\ref{bounded}). It follows that the corresponding solutions for 
system \eqref{controlsystem}--\eqref{initial} exist \cite{Lukes}.

\item[(ii)] In our case, the set \(\Omega\) where the admissible controls take values 
is $\Omega=\left\{u\in\mathbb{R}^{3}\colon||u||\leq1\right\}$,
which is clearly a convex and closed set.

\item[(iii)] The right-hand sides of equations of system \eqref{controlsystem} 
are continuous. All variables \(X,S,I,A\) and \(u\) are bounded on \([0,t_{f}]\) 
and can be written as a linear function of \(u_{1}, u_{2},\) and \(u_{3}\) 
with coefficients depending on time and state variables.

\item [(iv)] The integrand \(g(t,\varPhi,u)\) of \eqref{obj} is 
quadratic with respect to the control variables, so it is trivially convex. 

\item [(v)] Finally, it remains to show that there exists 
a constant \(\ell^{*}>1\) and positive constants 
\(\ell_{1},\ell_{2},\ell_{3}\) and \(\ell_{4}\) such that
\[
\frac{P_1\,{u_{1}}^{2}}{2}+\frac{P_2\,{u_{2}}^{2}}{2}
+\frac{P_{3}{u_{3}}^{2}}{2}+Q\,S^{2}-R\,{A^{2}}
\geq-\ell_{1}+\ell_{2}|u_{1}|^{\ell}+\ell_{3}|u_{2}|^{\ell}+\ell_{4}|u_{3}|^{\ell}.
\]
In Section~\ref{sec2}, we already showed that the state variables are bounded. 
Let $\ell_{1}=\sup\left(Q\,S^{2}-R\,A^{2}\right)$, $\ell_{2}=P_{1}$, 
$\ell_{3}=P_{2}$, $\ell_{4}=P_{3}$ and $\ell=2$. It follows that
\[
\frac{P_1\,{u_{1}}^{2}}{2}+\frac{P_2\,{u_{2}}^{2}}{2}
+\frac{P_{3}{u_{3}}^{2}}{2}+Q\,S^{2}-R\,{A^{2}}
\geq-\ell_{1}+\ell_{2}|u_{1}|^{\ell}+\ell_{3}|u_{2}|^{\ell}+\ell_{4}|u_{3}|^{\ell}.
\]
\end{itemize}
We conclude that there exists an optimal control triple \cite{Fleming}.
\end{proof}


\subsection{Characterization of the solution}

Since we know that there exists an optimal control 
triple for minimizing the functional
\[
J(u_{1},u_{2},u_{3})=\int^{t_f}_{0}
\left[\frac{P_1\,{u_{1}}^{2}(t)}{2}+\frac{P_2\,{u_{2}}^{2}(t)}{2}
+\frac{P_{3}{u_{3}}^{2}(t)}{2}+Q\,S^{2}(t)-R\,{A^{2}}(t)\right]dt
\] 
subject to the controlled system \eqref{controlsystem}
and initial conditions \eqref{initial}, 
we now derive, using the PMP, necessary conditions 
to characterize and find the optimal control triple \cite{Fleming,Pontryagin}. 
The necessary conditions include: the minimality condition,
the adjoint system, and the transversality conditions, 
which come from the PMP \cite{Pontryagin}. Roughly speaking,
the PMP reduces the optimal control problem, a dynamic optimization
problem, into a static optimization problem that consists of minimizing 
point-wise the Hamiltonian function \(\mathcal{H}\). 
The Hamiltonian associated to our problem is explicitly given by
\begin{equation*}
\begin{aligned}
\mathcal{H}(t,\varPhi,u,\lambda)
&=\frac{P_1\,{u_{1}}^{2}}{2}+\frac{P_2\,{u_{2}}^{2}}{2}
+\frac{P_{3}{u_{3}}^{2}}{2}+Q\,S^{2}-R\,{A^{2}}\\
&+\lambda_{1}\,\left(r X\left(1 - \frac{X}{K}\right) 
- \alpha XS - \frac{\phi \alpha XI}{a+X}\right)\\
&+\lambda_{2}\,\left(m_{1}\alpha XS - u_2\lambda AS 
- dS- \frac{u_1\gamma SA}{1+A}\right)\\
&+\lambda_{3}\,\left(\frac{m_{2}\phi \alpha XI}{a+X} 
+ u_2\lambda AS - (d+\delta)I- \frac{u_1\gamma IA}{1+A}\right)\\
&+\lambda_{4}\,\left(u_3\omega + \sigma(S+I) - \eta A\right).
\end{aligned}
\end{equation*}
The PMP asserts that if the control \(u^{*}=(u_{1}^{*},{u_{2}}^{*},{u_{3}}^{*})\) 
and the corresponding state \(\varPhi^{*}=(X^{*},S^{*},I^{*},A^{*})\) 
form an optimal couple, then, necessarily, there exists a
non-trivial adjoint vector \(\lambda=(\lambda_{1},\lambda_{2},\lambda_{3},\lambda_{4})\) 
satisfying the following Hamiltonian system \cite{berhe}:
\begin{equation*}
\begin{cases}
\frac{d\varPhi}{dt}=\frac{\partial\mathcal{H}(t,\varPhi,u,\lambda)}{\partial\lambda},\\
\frac{d\lambda}{dt}=-\frac{\partial\mathcal{H}(t,\varPhi,u,\lambda)}{\partial\varPhi},\\
\end{cases}
\end{equation*}
subject to initial conditions \eqref{initial} and transversality conditions 
$\lambda(t_f) =0$. Moreover, at each point of time $t$,
the optimal controls are characterized by
\begin{align}
\begin{cases}
\,u_{i}^{*}=1, &\text{if $\frac{\partial\mathcal{H}}{\partial u_{i}}<0$,} \\
\, {u_{i}}^{*}=0, &\text{if $\frac{\partial\mathcal{H}}{\partial u_{i}}>0$.}
\end{cases}
\end{align}

\begin{theorem}
\label{thm:PMP:ourOCP}
If the controls  $({u_1}^{*},{u_2}^{*},{u_{3}}^{*})$ and the corresponding trajectories
\((H^{*},S^{*},I^{*},A^{*})\) are optimal, then there exist adjoint variables 
$\lambda_1$, $\lambda_2$, $\lambda_3$ and $\lambda_4$ satisfying 
the system of equations
\begin{equation}
\label{adjoint}
\begin{cases}
\frac{d\lambda_1}{dt}&=\lambda_1\,\left(\alpha S+\frac{\phi\alpha aI}{(a+X)^2}
-r\left(1-\frac{2X}{K}\right)\right)-\lambda_2\,m_1\,\alpha S
-\lambda_3\,\frac{m_2\,\phi\alpha\, aI}{(a+X)^2},\\
\frac{d\lambda_2}{dt}&=-2\,Q\,S+\lambda_1\,\alpha X
+\lambda_2\left(\frac{u_{1}\gamma A}{1+A}+u_{2}\,\lambda\,A+d
-m_1\alpha X\right)-\lambda_3\,u_2\lambda A-\lambda_4\sigma,\\
\frac{d\lambda_3}{dt}&=\lambda_1\frac{\phi\alpha X}{a+X}
+\lambda_3\left(\frac{u_{1}\,\gamma\,A}{1+A}+d+\delta
-\frac{m_2\phi\alpha X}{a+X}\right)-\lambda_4\sigma,\\
\frac{d\lambda_4}{dt}&=2RA+\lambda_2\,\left(\frac{u_1\gamma S}{(1+A)^2}
+u_{2}\,\lambda\,S\right)+\lambda_3\,\left(\frac{u_{1}\gamma I}{(1+A)^2}
-u_{2}\,\lambda\,S\right)+\lambda_4\eta,
\end{cases}
\end{equation}
with transversality conditions 
\begin{equation}
\label{trans}
\lambda_i(t_f)=0, \quad i=1,2,3,4.
\end{equation}
Furthermore, for \(t\in[0,t_{f}]\), the optimal controls 
$u_1^*,u_2^*$ and \(u_{3}^{*}\) are characterized by
\begin{equation}
\label{ui}
\begin{aligned}
{u_1}^{*}(t)&=\max\left\{0,\min\left\{1,\frac{(\lambda_{2}(t)\,S(t)
+\lambda_{3}(t)\,I(t))\gamma A(t)}{P_1(1+A(t))}\right\}\right\},\\
{u_2}^{*}(t)&=\max\left\{0,\min\left\{1,\frac{(\lambda_{2}(t)
-\lambda_{3}(t))\lambda\,A(t)S(t)}{P_2}\right\}\right\},\\
{u_3}^{*}(t)&=\max\left\{0,\min\left\{1,
-\frac{\lambda_4(t)\omega}{P_{3}}\right\}\right\}.
\end{aligned}
\end{equation}
\end{theorem}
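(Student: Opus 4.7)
The plan is to apply Pontryagin's Minimum Principle directly, as the existence of an optimal triple has already been established in the previous theorem and the Hamiltonian $\mathcal{H}$ has been written out explicitly. Since the terminal time $t_f$ is fixed, the terminal state is free, and there is no terminal payoff term in the cost functional \eqref{obj}, the PMP yields adjoint equations $\dot{\lambda}_i = -\partial\mathcal{H}/\partial\varPhi_i$ together with the transversality conditions $\lambda_i(t_f)=0$ for $i=1,2,3,4$. The latter gives \eqref{trans} immediately, so the main task is to carry out the partial differentiation of $\mathcal{H}$ with respect to each state variable and to perform the point-wise minimization in $u$.

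First I would compute $-\partial\mathcal{H}/\partial X$, $-\partial\mathcal{H}/\partial S$, $-\partial\mathcal{H}/\partial I$ and $-\partial\mathcal{H}/\partial A$ term by term. The quadratic running-cost contributions are straightforward: $-\partial(QS^2)/\partial S = -2QS$ (contributing $+2QS$ on the right-hand side with appropriate sign convention, which after the overall minus sign yields the $-2QS$ term in the second equation of \eqref{adjoint}) and similarly the $+2RA$ in the $\dot\lambda_4$ equation comes from differentiating $-RA^2$. The remaining terms arise from differentiating the four right-hand sides $f_i$; in particular, the derivative of the Holling-II saturation $\phi\alpha XI/(a+X)$ with respect to $X$ produces the factor $\phi\alpha aI/(a+X)^2$ appearing in $\dot\lambda_1$ and $\dot\lambda_3$, while differentiation of the saturated awareness term $\gamma A/(1+A)$ with respect to $A$ yields $\gamma/(1+A)^2$, which gives the $\gamma S/(1+A)^2$ and $\gamma I/(1+A)^2$ pieces in the $\dot\lambda_4$ equation. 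Comparing systematically each resulting expression with the right-hand sides of \eqref{adjoint} will verify the adjoint system.

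Next, I would characterize the optimal controls from the minimality condition. Since $\mathcal{H}$ is a strictly convex (in fact quadratic with positive leading coefficient $P_i/2$) function of each $u_i$ separately, the unconstrained minimizer is the unique critical point, obtained from $\partial\mathcal{H}/\partial u_i=0$. A direct computation gives
\begin{equation*}
\frac{\partial\mathcal{H}}{\partial u_1}=P_1 u_1-\frac{\gamma A}{1+A}(\lambda_2 S+\lambda_3 I),
\quad
\frac{\partial\mathcal{H}}{\partial u_2}=P_2 u_2-\lambda A S(\lambda_2-\lambda_3),
\quad
\frac{\partial\mathcal{H}}{\partial u_3}=P_3 u_3+\lambda_4\omega,
\end{equation*}
and setting each expression to zero yields the unconstrained minimizers $\tilde u_1=(\lambda_2 S+\lambda_3 I)\gamma A/[P_1(1+A)]$, $\tilde u_2=(\lambda_2-\lambda_3)\lambda A S/P_2$ and $\tilde u_3=-\lambda_4\omega/P_3$. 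Because of the constraint $u_i\in[0,1]$ in the admissible set $\mathcal{U}$, the true minimizer on the interval is obtained by projecting $\tilde u_i$ onto $[0,1]$, which gives exactly the $\max\{0,\min\{1,\cdot\}\}$ expressions in \eqref{ui}.

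The main obstacle I expect is purely bookkeeping: the Hamiltonian has many terms and several of the state partial derivatives involve quotients, so the sign conventions and the chain rule on $X/(a+X)$ and $A/(1+A)$ must be tracked carefully to reproduce \eqref{adjoint} exactly. Apart from that, the proof is a routine application of the PMP; convexity of $\mathcal{H}$ in $u$ guarantees that the point-wise minimization has a unique solution, so no singular or bang-bang alternatives arise and the projection formula \eqref{ui} fully characterizes the optimal triple.
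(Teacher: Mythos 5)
Your proposal is correct and follows exactly the route the paper takes: the paper's proof simply states that the theorem is "a direct consequence of the PMP," and your detailed derivation (adjoint system from $\dot{\lambda}_i=-\partial\mathcal{H}/\partial\varPhi_i$, transversality from the free terminal state, and the controls from pointwise minimization of the quadratic Hamiltonian followed by projection onto $[0,1]$) is just that consequence written out explicitly; your computed expressions for the adjoint equations and for $u_1^*$, $u_2^*$, $u_3^*$ all match \eqref{adjoint}--\eqref{ui}.
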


\begin{proof}
The result is a direct consequence of the PMP.
\end{proof}

In order to confirm the nature of the Pontryagin extremals 
given by Theorem~\ref{thm:PMP:ourOCP}, we check the Hessian 
matrix of the Hamiltonian \(\mathcal{H}\). 
Because the Hessian matrix of \(\mathcal{H}\) with respect 
to the control variables is given by
\begin{equation*}
\frac{\partial^{2}\mathcal{H}}{\partial u^{2}}
=\left[
\begin{array}{ccc}
\frac{\partial^{2}\mathcal{H}}{\partial u_{1}^{2}}
&\frac{\partial^{2}\mathcal{H}}{\partial u_{1}\partial u_{2}}
&\frac{\partial^{2}\mathcal{H}}{\partial u_{1}\partial u_{3}}\\
\frac{\partial^{2}\mathcal{H}}{\partial u_{2}\partial u_{1}}
&\frac{\partial^{2}\mathcal{H}}{\partial u_{2}^{2}}
&\frac{\partial^{2}\mathcal{H}}{\partial u_{2}\partial u_{3}}\\
\frac{\partial^{2}\mathcal{H}}{\partial u_3\partial u_{1}}
&\frac{\partial^{2}\mathcal{H}}{\partial u_{3}\partial u_{2}}
&\frac{\partial^{2}\mathcal{H}}{\partial u_{3}^{2}}
\end{array}\right]
=\left[
\begin{array}{ccc}
P_{1}&0&0\\
0&P_{2}&0\\
0&0&P_{3}
\end{array}\right],
\end{equation*}
which is a positive definite matrix as a consequence 
of the positive weights \(P_{1},P_{2},P_{3}\), 
the Hamiltonian \(\mathcal{H}\) is convex
with respect to the control variables and, as a result, 
the Pontryagin extremals will be minimizers and not maximizers. 


\subsection{The method to solve the optimal control problem}

The optimal controls and the corresponding state functions 
are found by solving a system of dynamics called the 
\emph{optimality system}, and consisting of 
\begin{itemize}
\item[(i)] the state system \eqref{controlsystem} 
together with their initial conditions\eqref{initial};
\item[(ii)] the adjoint system \eqref{adjoint};
\item[(iii)] the terminal conditions \eqref{trans}; 
\item[(iv)] and the characterization of the optimal controls \eqref{ui}.
\end{itemize}
In combination, the method consists to solve the system
\begin{equation}
\label{optimality}
\begin{cases}
&\frac{dX}{dt}=r X\left(1 - \frac{X}{K}\right) 
- \alpha XS - \frac{\phi \alpha XI}{a+X} ,\\
&\frac{dS}{dt}=m_{1}\alpha XS - u_2\lambda AS - dS- \frac{u_1\gamma SA}{1+A},\\
&\frac{dI}{dt}=\frac{m_{2}\phi \alpha XI}{a+X} 
+ u_2\lambda AS - (d+\delta)I- \frac{u_1\gamma IA}{1+A},\\
&\frac{dA}{dt}=u_3\omega + \sigma(S+I) - \eta A,\\
&X(0) = X_0\geq 0, \ S(0) = S_0\geq 0, \ I(0) = I_0 \geq 0, \ A(0) = A_0 \geq 0,\\
&\frac{d\lambda_1}{dt}=\lambda_1\,\left(\alpha S+\frac{\phi\alpha aI}{(a+X)^2}
-r\left(1-\frac{2X}{K}\right)\right)-\lambda_2\,m_1\,\alpha S
-\lambda_3\,\frac{m_2\,\phi\alpha\, aI}{(a+X)^2},\\
&\frac{d\lambda_2}{dt}=-2\,Q\,S+\lambda_1\,\alpha X
+\lambda_2\left(\frac{u_{1}\gamma A}{(1+A)}+u_{2}\,\lambda\,A+d
-m_1\alpha X\right)-\lambda_3\,u_2\lambda A-\lambda_4\sigma,\\
&\frac{d\lambda_3}{dt}=\lambda_1\frac{\phi\alpha X}{a+X}
+\lambda_3\left(\frac{u_{1}\,\gamma\,A}{1+A}+d+\delta
-\frac{m_2\phi\alpha X}{a+X}\right)-\lambda_4\sigma,\\
&\frac{d\lambda_4}{dt}=2RA+\lambda_2\,\left(\frac{u_1\gamma S}{(1+A)^2}
+u_{2}\,\lambda\,S\right)+\lambda_3\,\left(\frac{u_{1}\gamma I}{(1+A)^2}
-u_{2}\,\lambda\,S\right)+\lambda_4\eta,\\
&\lambda_{1}(t_{f})=\lambda_{2}(t_{f})=\lambda_{3}(t_{f})=\lambda_{4}(t_{f})=0,
\end{cases}
\end{equation}
where $u_1$, $u_2$ and $u_3$ are given as in \eqref{ui}.
It is important to note that the adjoint system \eqref{adjoint} is also linear 
in $\lambda_{i}$ for \(i=1,2,3,4\) with bounded coefficients. Thus, there exists 
a positive real number \(M\) such that \(|\lambda_{i}|\leq M\) on \(t\in[0,t_{f}]\). 
Hence, for a sufficiently small time \(t_{f}\), the solution 
to the optimality system \eqref{optimality} is unique.
The need for a small time interval in order to guarantee uniqueness
of solution is due to the opposite
time direction/orientations of the optimality system: the state system 
has initial values while the adjoint system has terminal values.
Solving \eqref{optimality} analytically is not possible.
Consequently, we use a numerical method to find the approximate 
optimal solutions \(\varPhi^{*}\) and \(u^{*}\).

In Section~\ref{sec8}, we solve the optimal control problem numerically 
and observe the behavior of some solutions as time varies.


\section{Numerical simulations}
\label{sec8}

Since the analytical solution of system \eqref{eq1} is not practical 
to analyze, the numerical results play a great role in characterizing 
the dynamics. Our numerical simulations show how realistic our results 
are and illustrate well the predicted analytical behavior.
We begin by analyzing system \eqref{eq1} without controls,
then our control system \eqref{controlsystem} subject to the optimal controls, 
as characterized by the PMP. Our numerical simulations are acquired 
with a set of parameter values as given in Table~\ref{table1.1}. 
\begin{table}[ht!]
\begin{center}
\caption{Parameter values used in our numerical simulations.}
\begin{tabular}{|c|l|l|l||} \hline
Parameters  & Description  & Value &Source\\ \hline
$r$ & Growth rate of crop biomass & 0.05 per day &\cite{impact} \\
$K$ & Maximum density of crop biomass & 1 $m^{-2}$ &\cite{TAbraha}\\
$\lambda$ & Aware people activity rate& 0.025 per day&\cite{JTB}\\
$d$ & Natural mortality of pest& 0.01 day$^{-1}$&\cite{TAbraha}\\
$m_1$ & Conversion efficacy of susceptible pests & 0.8 &\cite{JTB}\\
$m_2$& Conversion efficiency of infected pest& 0.6& \cite{JTB}\\
$\delta$ & Disease related mortality rate & 0.1 per day &\cite{TAbraha}\\
$a$ & Half saturation constant &  0.2 & \cite{impact}\\
$\alpha$ & Attack rate of pest & 0.025 pest$^{-1}$per day&\cite{JTB}\\
$\sigma$ & Local rate of increase of awareness& 0.015 per day& Assumed\\
$\gamma$& The increase of level from global advertisement&0.025&\cite{EC19}\\
$\eta$ & Fading of memory of aware people& 0.015 {day$^{-1}$}& \cite{impact} \\
$\omega$ & Rate of global awareness (via TV, radio) & 0.003 day$^{-1}$ &Assumed\\ \hline
\end{tabular}
\label{table1.1}
\end{center}
\end{table}
For our numerical experiments of the uncontrolled system, 
we take $t_{f}=600$ days; while for the numerical simulations 
of the optimal control problem we fix $t_{f}=60$ days.  
The values of the weight function are taken as 
$P_1 =0.8$, $P_2=0.5$, $P_{3}=0.5$ $Q=10$, and $R=10$, 
and the initial state variables 
as $X(0)=0.2$, $S(0)=0.07$, $I(0)=0.05$, $A(0)=0.5$.
In Figures~\ref{fig1} and \ref{fig2}, the time series solution of model system 
\eqref{eq1} are sketched with different values of the parameters $\alpha$ and $\gamma$.
\begin{figure}[ht!]
\centering
\includegraphics[width=5in]{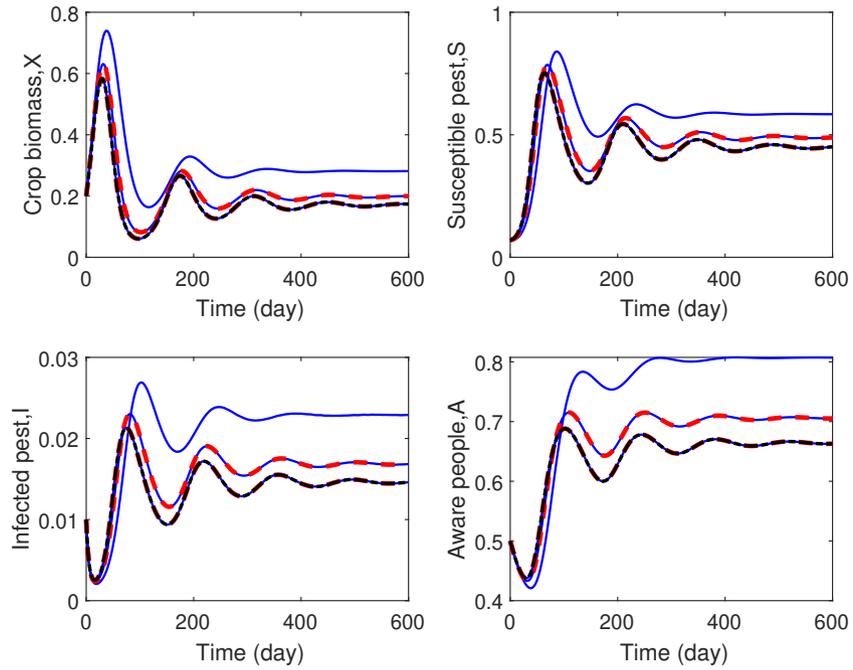}
\caption{Numerical solution of system \eqref{eq1} 
for different values of the rate $\alpha$ of pest: 
$\alpha=0.12$ (blue line), $\alpha=0.16$ (red line), 
$\alpha=0.18$ (black line). 
Other parameter values as in Table~\ref{table1.1}.}
\label{fig1}
\end{figure}
\begin{figure}[ht!]
\centering
\includegraphics[width=5in]{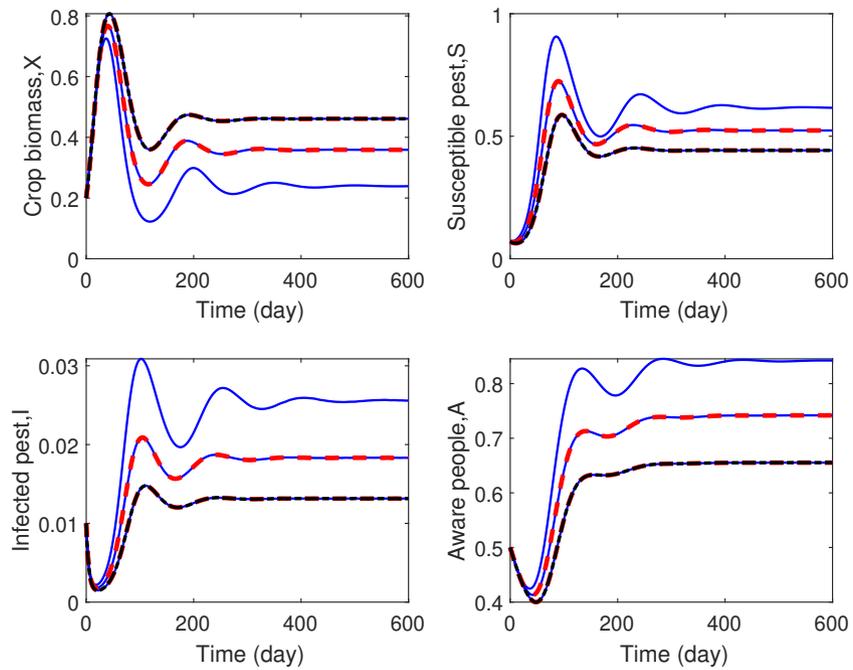}
\caption{Numerical solution of system \eqref{eq1} 
for different values of the rate $\gamma$ of pest: 
$\gamma=0.01$ (blue line), $\gamma=0.0.04$ (red line), 
$\gamma=0.07$ (black line). 
Other parameter values as in Table~\ref{table1.1}.}
\label{fig2}
\end{figure}
It is observed that our model variables $X(t)$, $S(t)$, $I(t)$ and $A(t)$ become oscillating
as the values of the rates (i.e., $\alpha$ and $\gamma$) get larger and finally become stable. 
Also, the steady state value of both pest population (when they exist) are decreased 
as $\alpha $ and $\gamma$ rise. A bifurcation illustration is shown in Figure~\ref{fig3}, 
taking $\alpha$ as the main parameter. 
\begin{figure}[ht!]
\centering
\includegraphics[scale=0.64]{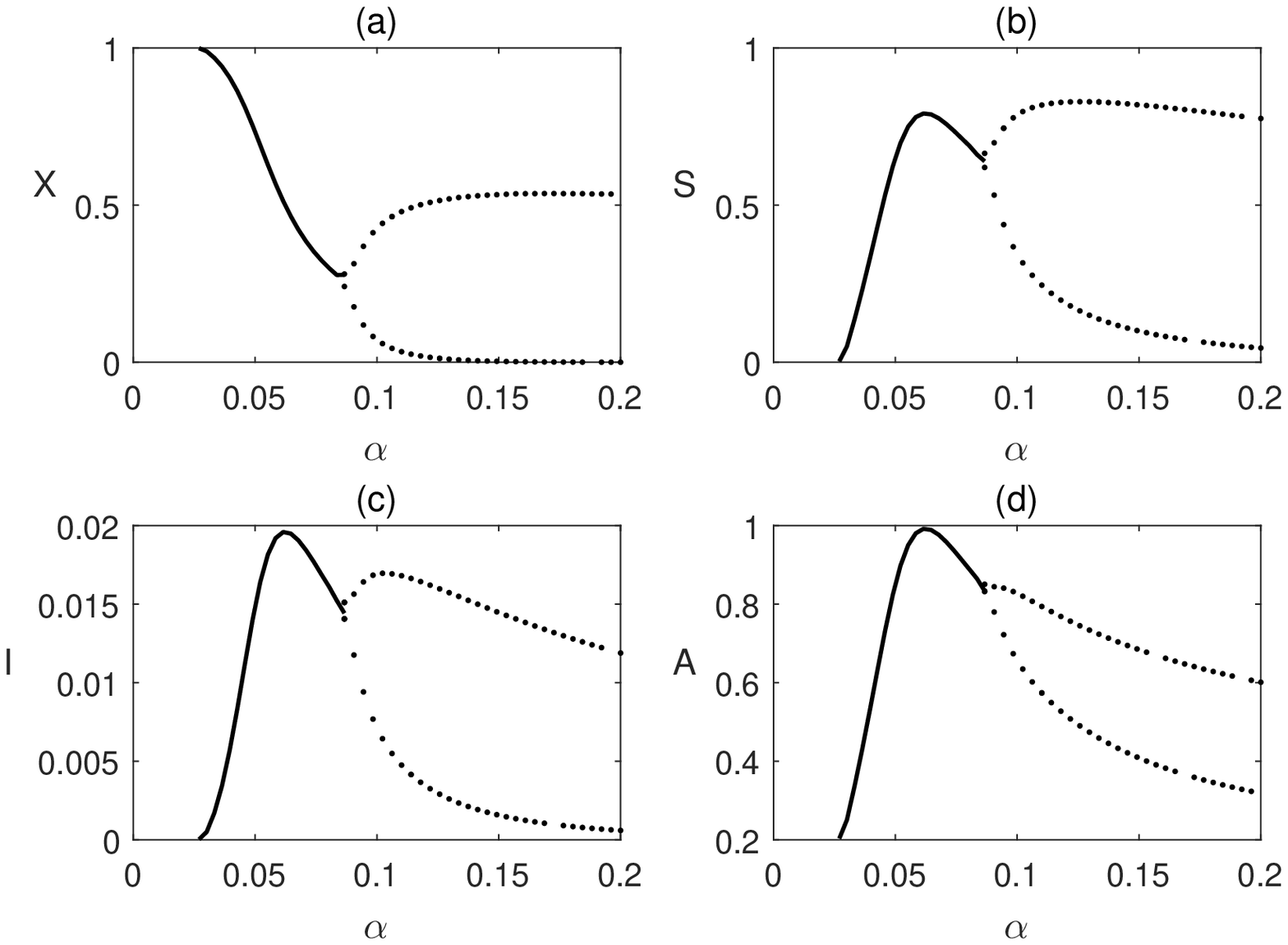}
\caption{Bifurcation diagram of the coexisting equilibrium $E^*$ 
(when exists) of system \eqref{eq1} with respect to the attack 
rate $\alpha$ of pest. Solid line indicates stable endemic equilibrium.}
\label{fig3}
\end{figure}

Critical values depend on many parameters, 
such as the conversion rates $m_{1}$ and $m_{2}$, 
the rate of the awareness program $\sigma$, 
the recruitment rate of global awareness $\omega$, 
the chemical pesticide control $u_{1}$, etc. 
We examine the impact of optimal control profiles by implementing 
a Runge--Kutta fourth-order scheme on the optimality system
\eqref{optimality}. The dynamical behavior of the model, in relation 
to the controls, is presented. The optimal policy is achieved 
by finding a solution to the state system \eqref{eq1} and costate system 
\eqref{controlsystem}. To find the optimal controls and respective states, we use the
Runge--Kutta numerical method and the technical computing program MATLAB. 
As already discussed, one needs to solve four-state equations and four adjoint equations. 
For that, first we solve system \eqref{controlsystem} with
an initial guess for the controls forward in time and then,
using the transversality conditions as initial values, 
the adjoint system \eqref{adjoint} is solved backwards 
in time using the current iteration solution of the state system. 
The controls are updated by using a convex combination of the previous
controls and the values from \eqref{ui}. The process continues
until the solution of the state equations at the present is very 
close to the previous iteration values. Precisely, in our numerical computations 
we use Algorithm~\ref{AlgFBSM}.
\begin{algorithm}[ht!]
\caption{Forward-Backward Sweep Method}
\label{AlgFBSM}
\begin{algorithmic}[1]
\State Make an initial guess for $u$ over the time interval (we took $u\equiv0$).
\State Using the initial condition $\varPhi_1=\varPhi(0)$  
and the values for $u$, solve $\varPhi$ 
\emph{forward} in time in compliance with its differential equation 
in the optimality system (we used \texttt{RK4}).
\State Using the transversality condition $\lambda_{N+1}=\lambda(t_f)$ 
and the values for $u$ and $\varPhi$, 
solve $\lambda$ \emph{backward} in time according 
to its differential equation in the optimality system (we used \texttt{RK4}).
\State Update $u$ using the new values for $\varPhi$ 
and $\lambda$ into the characterization of the optimal control.
\State Check convergence: if the variables are sufficiently close 
to the corresponding ones in the previous iteration, then output 
the current values as solutions; else return to Step 2.
\end{algorithmic}
\end{algorithm}
This algorithm solves a two point boundary-value problem, 
with divided boundary conditions at $t_{0}=0$ and $t=t_{f}$. 
The numerical solution of the optimal control problem is given 
in Figure~\ref{fig4}, showing the impact of optimal control theory.
\begin{figure}[ht!] 
\centering
\includegraphics[width=5in]{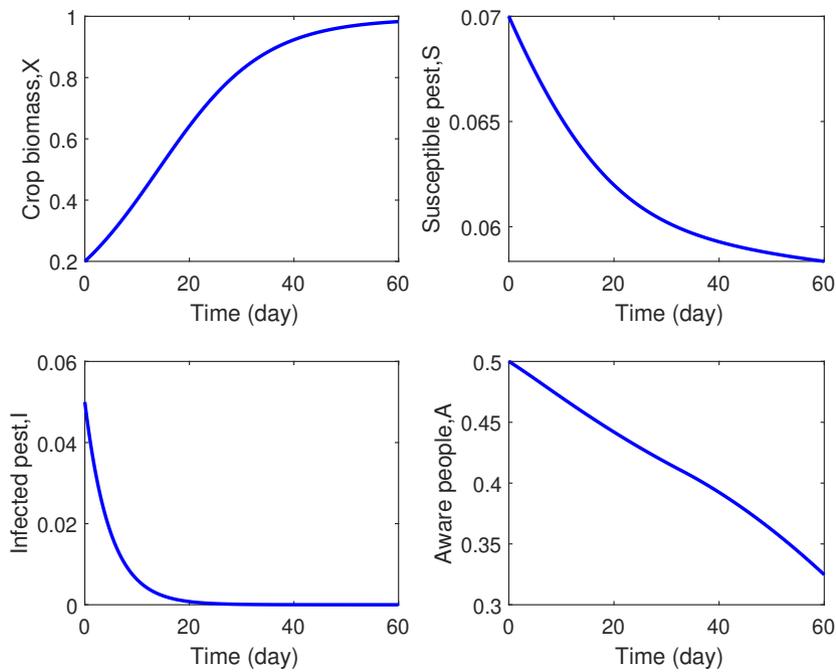}
\caption{Numerical solutions of the optimal control problem 
taking parameters as in Table~\ref{table1.1}.}
\label{fig4}
\end{figure}
We apply the control through chemical pesticide effects, bio-pesticides, 
and cost of advertisements for a time period of $60$ days. In Figure~\ref{fig4}, 
we note that, due to the effort of optimal controls \(u_{1}^*, u_{2}^*, u_{3}^*\),
crop biomass population obtains its maximum value in $60$ days, 
susceptible pest minimizes and infected pest is also minimized 
and reduced to $0$ in the first $20$ days. The population of pest is
reduced radically with an influence of the best frameworks of universal awareness 
(i.e., $u_2^*\lambda$) and chemical pesticides control movement, $u_1^*\gamma$. 
It is also seen that the susceptible pest population goes to devastation 
inside the earliest $50$ days, due to the effort of the extremal controls, 
which are shown in Figure~\ref{fig8}.
\begin{figure}[ht!]
\centering
\includegraphics[width=5in]{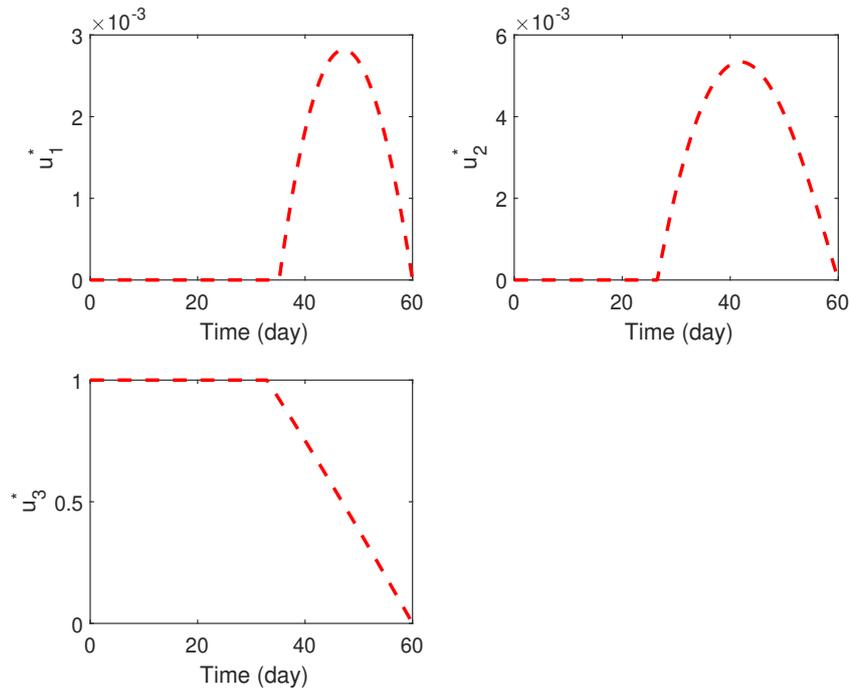}
\caption{Pontryagin extremal controls $u_1^*$,$u_2^*$ and $u_3^*$ 
plotted as functions of time.}
\label{fig8}
\end{figure}
Thus, the optimal control policy, by means of chemical pesticides, 
biological control, and global farming awareness, has a great influence 
in making the system free of pest and maintaining the stable nature 
in the remaining time period. Figure~\ref{fig8} shows that optimal chemical 
pesticides and biological control are needed to control the environmental 
crop biomass and to minimize the cost of cultivation with optimal awareness 
through global media. 


\section{Conclusions}
\label{sec9}

In this article, a mathematical model, described by a system of ordinary 
differential equations, has been developed and analyzed to plan 
the control of pests in a farming environment. Our model contains four concentrations, 
specifically, concentration of crop-biomass, density of susceptible pests, 
infected pests, and population awareness. The model under consideration exhibits 
four feasible steady state points: the crop-pest free equilibrium point, 
which is unstable for all parameter values; the pest free equilibrium point; 
the susceptible pest free equilibrium point, which may exist when 
the carrying capacity $K$ is greater than the crop biomass $X$; 
and the interior equilibrium point. Local stability of the positive interior 
equilibrium point \(E^{*}\) and local Hopf-bifurcation around it have been studied.
We have shown how the dynamics changes with the parameter value \(\alpha\)
(the consumption rate of pest to crops). The dynamical behavior of the system 
was investigated using stability theory, optimal control theory, and numerical simulations.
We assumed that responsive groups take on bio-control, such as the included pest managing, 
as it is eco-friendly and is fewer injurious to individual health and surroundings. 
Neighboring awareness movements may be full as comparative to the concentration 
of susceptible pest available in the crop biomass. We expect that the international 
issues, disseminated by radio, TV, telephone, internet, etc., enlarge the stage 
of consciousness. Moreover, we have used optimal control theory to provide 
the price effective outline of bio-pesticides, chemical pesticide costs and 
a universal alertness movement. We observed the dynamical behavior of 
the controlled system and the effects of the three controls. 
This work can be extended in several ways, for example 
by introducing time delays in the awareness level of farmers attitude 
towards observation of fields and in becoming aware of their farm after 
campaigns made. Consideration of the crop population as infected and uninfected 
cases is also another possible extension to the present paper, 
in order to enrich the proposed mathematical model for pest control.


\section*{Acknowledgments}

This work is part of first authors' PhD project, carried out
at Adama Science and Technology University (ASTU), Ethiopia.
Abraha acknowledges ASTU for its welcome and bear during this work, 
through the research grant ASTU/SP-R/027/19. 
Torres is grateful to the financial support from the Portuguese Foundation 
for Science and Technology (FCT), through CIDMA and project UIDB/04106/2020.



\section*{Author Biographies}


\begin{biography}{\includegraphics[scale=0.06]{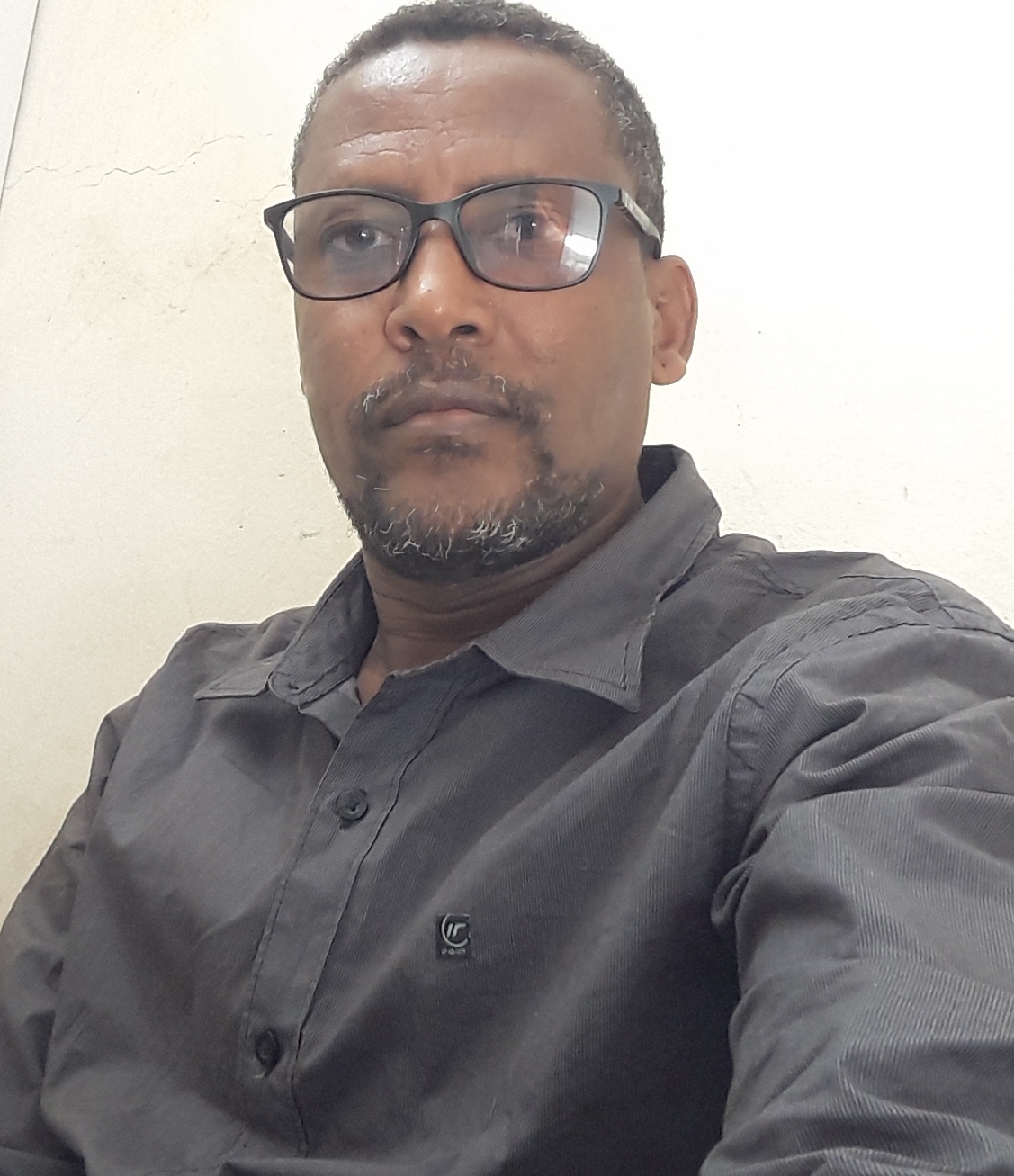}}{\textbf{Teklebirhan
Abraha} received his BSc degree in Applied Mathematics from the University of Gondar, Ethiopia, 
in July 2006, and an MSc degree in Applied Mathematics (Optimization) from Addis Ababa University,
Ethiopia, in January 2011. He is currently a PhD candidate in Applied Mathematics (Optimization) 
at Adama Science and Technology University, Ethiopia, under the supervision 
of Professor Delfim F. M Torres and Dr. Legesse Lemecha. His research interests 
are in the areas of applied mathematics, including optimization and optimal control, 
mathematical modeling of biological systems, and operations research.}
\end{biography}

\begin{biography}{\includegraphics[scale=0.27]{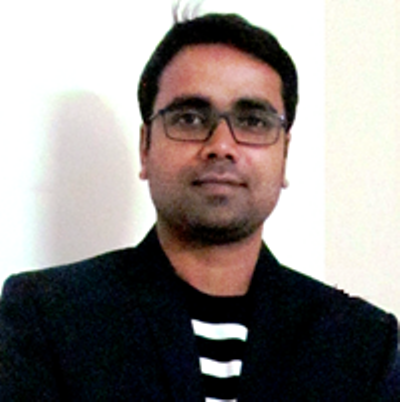}}{\textbf{Fahad Al Basir} 
is an Assistant Professor, Department of Mathematics, Asansol Girls' College, West Bengal, India. 
He received BSc, MSc, and PhD degrees from Jadavpur University, Kolkata, India. 
He joined, as a Post-Doctoral fellow at the Department of Zoology, Visva-Bharati University, 
Santiniketan, India. He received Dr. D.S. Kothari Post-Doctoral Fellowship, University Grants
Commission, from the Government of India. He is serving as an academic editor of 
Modelling and Simulation in Engineering, an Hindawi publication. He has authored 
and co-authored several research articles in reputed journals. His research includes 
mathematical modeling using ordinary and delay differential equations 
in disease and pest management, chemical and biochemical systems, and ecology.}
\end{biography}

\begin{biography}{\includegraphics[scale=0.47]{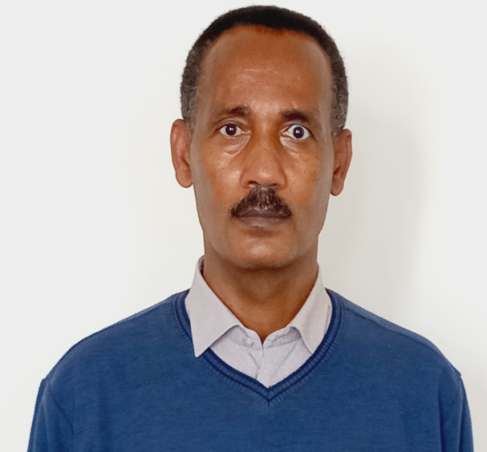}}{\textbf{Legesse Lemecha Obsu} 
is an Associate Professor of Mathematics and a dean of Postgraduate program 
at Adama Science and Technology University, Ethiopia.  He received MSc and PhD 
degrees in Mathematics from Addis Ababa University. From 1995 to 1999 
he was an undergraduate student at the then Kotebe College of Teachers Education. 
He has authored and co-authored several research articles in reputed journals. 
His area of research is mainly focused on Mathematical Modeling, 
including traffic flow, epidemiology (infectious diseases) and ecology.}
\end{biography}

\begin{biography}{\includegraphics[scale=0.23]{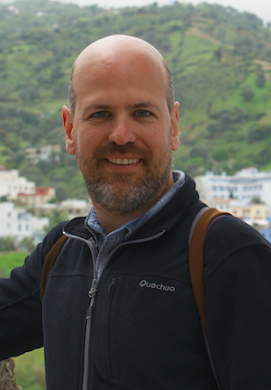}}{\textbf{Delfim F. M. Torres} 		
is a Portuguese Mathematician born 16 August 1971 in Nampula, Portuguese Mozambique. 
He obtained a PhD in Mathematics from University of Aveiro (UA) in 2002, 
and Habilitation in Mathematics, UA, in 2011. He is a Full Professor 
of Mathematics, since 9 March 2015, Director of the R\&D Unit CIDMA, 
the largest Portuguese research center in Mathematics, 
and Coordinator of its Systems and Control Group. His main research area 
is calculus of variations and optimal control; optimization; fractional derivatives 
and integrals; dynamic equations on time scales; and mathematical biology. 
Torres has written outstanding scientific and pedagogical publications. In particular, 
he is author of two books with Imperial College Press and three books with Springer. 
He has strong experience in graduate and post-graduate student supervision 
and teaching in mathematics. Twenty PhD students in Mathematics 
have successfully finished under his supervision. Moreover, he has been team leader and member 
in several national and international R\&D projects, including EU projects 
and networks. Prof. Torres is a Highly Cited Researcher in Mathematics, 
having been awarded the title in 2015, 2016, 2017, and 2019.
He is, since 2013, the Director of the Doctoral Programme 
Consortium in Applied Mathematics (MAP-PDMA) of Universities 
of Minho, Aveiro, and Porto. Delfim is married since 2003, 
and has one daughter and two sons.}
\end{biography}



\begin{thebibliography}{10}

\providecommand \doibase [0]{http://dx.doi.org/}%
	
\bibitem{TAbraha}
Abraha T, Al~Basir F, Obsu LL, Torres DFM. 
Pest control using farming awareness: 
impact of time delays and optimal use of biopesticides. 
{\it Chaos Solitons Fractals} 2021\string; 146\string: Art.~110869, 11~pp.
\newblock \href {\doibase 10.1016/j.chaos.2021.110869} 
{doi: 10.1016/j.chaos.2021.110869}
{\tt arXiv:2103.06007}
	
\bibitem{Abraha2021}
Abraha T, Al~Basir F, Obsu LL, Torres DFM. 
Farming awareness based optimum interventions for crop pest control. 
{\it Math. Biosci. Eng.} 2021\string; 18(5)\string: 5364--5391.
\newblock \href {\doibase 10.3934/mbe.2021272} {doi: 10.3934/mbe.2021272}
{\tt arXiv:2106.08192}
	
\bibitem{NS}
Chowdhury J, Al~Basir F, Pal J, Roy PK. 
Pest control for {J}atropha curcas plant through viral disease: 
a mathematical approach. 
{\it Nonlinear Stud.} 2016\string; 23(4)\string: 517--532.
	
\bibitem{EC19}
Chowdhury J, Al~Basir F, Takeuchi Y, Ghosh M, Roy PK. 
A mathematical model for pest management in {J}atropha curcas 
with integrated pesticides---An optimal control approach. 
{\it Ecological Complexity} 2019\string; 37\string: 24--31.
	
\bibitem{JTB}
Basir FA, Banerjee A, Ray S. 
Role of farming awareness in crop pest management---a mathematical model. 
{\it J. Theoret. Biol.} 2019\string; 461\string: 59--67.
\newblock \href {\doibase 10.1016/j.jtbi.2018.10.043} 
{doi: 10.1016/j.jtbi.2018.10.043}
	
\bibitem{Zhang}
Zhang H, Chen L, Nieto JJ. 
A delayed epidemic model with stage-structure and pulses for pest management strategy. 
{\it Nonlinear Anal. Real World Appl.} 2008\string; 9(4)\string: 1714--1726.
\newblock \href {\doibase 10.1016/j.nonrwa.2007.05.004} 
{doi: 10.1016/j.nonrwa.2007.05.004}
	
\bibitem{Wang08}
Wang X, Song X. 
Mathematical models for the control of a pest population by infected pest. 
{\it Comput. Math. Appl.} 2008\string; 56(1)\string: 266--278.
\newblock \href {\doibase 10.1016/j.camwa.2007.12.015} 
{doi: 10.1016/j.camwa.2007.12.015}

\bibitem{Ghosh}
Ghosh S, Bhattacharyya S, Bhattacharya DK. 
The role of viral infection in pest control: a mathematical study. 
{\it Bull. Math. Biol.} 2007\string; 69(8)\string: 2649--2691.
\newblock \href {\doibase 10.1007/s11538-007-9235-8} 
{doi: 10.1007/s11538-007-9235-8}
	
\bibitem{Kar}
Kar TK, Ghorai A, Jana S. 
Dynamics of pest and its predator model with disease 
in the pest and optimal use of pesticide. 
{\it J. Theoret. Biol.} 2012\string; 310\string: 187--198.
\newblock \href {\doibase 10.1016/j.jtbi.2012.06.032} 
{doi: 10.1016/j.jtbi.2012.06.032}

\bibitem{Bhattacharya04}
Bhattacharya DK, Karan S. 
On bionomic model of integrated pest management of a single pest population. 
{\it Differential Equations Dynam. Systems} 2004\string; 12(3-4)\string: 301--330.

\bibitem{EC18}
Al~Basir F, Blyuss KB, Ray S. 
Modelling the effects of awareness-based interventions 
to control the mosaic disease of {J}atropha curcas. 
{\it Ecological complexity} 2018\string; 36\string: 92--100.

\bibitem{Fleming}
Fleming WH, Rishel RW. 
{\it Deterministic and stochastic optimal control}.
\newblock Applications of Mathematics, No. 1Springer-Verlag, Berlin-New York.
\newblock 1975.
	
\bibitem{Pontryagin}
Pontryagin LS, Boltyanskii VG, Gamkrelidze RV, Mishchenko EF. 
{\it The mathematical theory of optimal processes}.
\newblock Interscience Publishers John Wiley \& Sons, Inc.\, New York-London.
\newblock 1962.
	
\bibitem{Suzan}
Lenhart S, Workman JT. 
{\it Optimal control applied to biological models}.
\newblock Chapman \& Hall/CRC Mathematical and Computational Biology Series, 
Chapman \& Hall/CRC, Boca Raton, FL.
\newblock 2007.
	
\bibitem{Lukes}
Lukes DL. 
{\it Differential equations}. 
162 of {\it Mathematics in Science and Engineering}.
\newblock Academic Press, Inc. [Harcourt Brace Jovanovich, Publishers],
London-New York.
\newblock 1982.
	
\bibitem{berhe}
Berhe HW, Makinde OD, Theuri DM. 
Optimal control and cost-effectiveness analysis for dysentery epidemic model. 
{\it Appl. Math. Inf. Sci.} 2018\string; 12(6)\string: 1183--1195.
\newblock \href {\doibase 10.18576/amis/120613} {doi: 10.18576/amis/120613}

\bibitem{impact}
Al~Basir F, Ray S. 
Impact of farming awareness based roguing, insecticide 
spraying and optimal control on the dynamics of mosaic disease. 
{\it Ric. Mat.} 2020\string; 69(2)\string: 393--412.
\newblock \href {\doibase 10.1007/s11587-020-00522-8} 
{doi: 10.1007/s11587-020-00522-8}
	
\end{thebibliography}
\end{document}